\newtheorem{definition}{Definition}[section]
\newtheorem{proposition}[definition]{Proposition}
\newtheorem{remark}[definition]{Remark}
\newtheorem{theorem}[definition]{Theorem}
\def\rawo\lonra{\longrightarrow}
\def\ot{\otimes}
\newcommand{\selabel}[1]{\label{se:#1}}
\newenvironment{proof}{{\it Proof.}}{\hfill $ \square $ \vskip 4mm}
\begin{document}
\title{Equivalent crossed products and cross product bialgebras
\thanks{This work was supported by a grant of the Romanian National 
Authority for Scientific Research, CNCS-UEFISCDI, 
project number PN-II-ID-PCE-2011-3-0635,  
contract nr. 253/5.10.2011.}}
\author {Florin Panaite\\
Institute of Mathematics of the 
Romanian Academy\\ 
PO-Box 1-764, RO-014700 Bucharest, Romania\\
e-mail: Florin.Panaite@imar.ro
}
\date{}
\maketitle
\begin{center}
{\em Dedicated to Fred Van Oystaeyen, on the occasion of his} 
$65^{\:th}$ {\em birthday}
\end{center}
\begin{abstract}
In a previous paper we proved a result of the type ''invariance under twisting'' for  
Brzezi\'{n}ski's crossed products. In this paper we prove a converse 
of this result, obtaining thus a characterization of what we call equivalent 
crossed products. As an application, we characterize cross product 
bialgebras (in the sense of Bespalov and Drabant) that are equivalent 
(in a certain sense) to a given cross product bialgebra in which one of the 
factors is a bialgebra and whose coalgebra structure is a tensor product 
coalgebra. 
\end{abstract}
\section*{Introduction}
${\;\;\;\;}$In \cite{brz}, Brzezi\'{n}ski introduced a very general construction, 
called {\em crossed product}, containing as particular cases several important 
constructions introduced before, such as twisted tensor products of algebras 
(as in \cite{Cap}, \cite{VanDaele}) and classical Hopf crossed products. 
Given an (associative unital) algebra $A$, a vector space $V$ endowed with a  
distinguished element $1_V$  and two linear maps $\sigma :V\otimes V
\rightarrow A\otimes V$ and $R:V\otimes A\rightarrow A\otimes V$ 
satisfying certain conditions, Brzezi\'{n}ski's crossed product is a certain 
(associative unital) algebra structure on $A\otimes V$, denoted in what follows by 
$A\otimes _{R, \sigma }V$. 

In \cite{panaite} we proved a result 
of the type invariance under 
twisting for  crossed products (containing as particular cases the 
invariance under twisting for twisted tensor products of algebras 
from \cite{jlpvo} and  
the invariance under twisting for quasi-Hopf smash products from 
\cite{bpvo}) as follows: if  $A\ot _{R, \sigma }V$ is a crossed product 
and $\theta , \gamma :V\rightarrow A\ot V$ are linear maps, 
we can define certain maps  $\sigma ':V\ot V\rightarrow A\ot V$ and 
$R':V\ot A\rightarrow A\ot V$ and if some conditions are 
satisfied then  $A\ot _{R', \sigma '}V$ is a crossed product, isomorphic 
to  $A\ot _{R, \sigma }V$.

Our first aim here is to prove a converse of this result.  First, we call two 
crossed products  $A\ot _{R, \sigma }V$ and $A\ot _{R', \sigma '}V$ 
{\em equivalent} if there exists a linear isomorphism 
$\varphi :A\ot _{R', \sigma '}V\simeq A\ot _{R, \sigma }V$ that is an 
algebra map and a morphism of left $A$-modules. With this terminology, our result says that 
two crossed products $A\ot _{R, \sigma }V$ and $A\ot _{R', \sigma '}V$ 
are equivalent if and only if there exist linear maps 
$\theta , \gamma :V\rightarrow A\otimes V$ satisfying a certain list of conditions. 

There exists a dual construction to Brzezi\'{n}ski's crossed product, called 
crossed coproduct. A {\em cross product bialgebra}, as defined by 
Bespalov and Drabant in \cite{BDII}, is a bialgebra whose algebra structure is a 
crossed product algebra and whose coalgebra structure is a crossed coproduct 
coalgebra. If  
$A _{\;W}^{\;\rho}\bowtie _{\;R}^{\;\sigma }C$ and 
$A _{\;W'}^{\;\rho '}\bowtie _{\;R'}^{\;\sigma '}C$ are cross product bialgebras, 
we call them {\em equivalent} if there exists a 
linear isomorphism 
$\varphi :A _{\;W'}^{\;\rho '}\bowtie _{\;R'}^{\;\sigma '}C
\simeq A _{\;W}^{\;\rho}\bowtie _{\;R}^{\;\sigma }C$ that is a 
morphism of bialgebras, of left $A$-modules and of right 
$C$-comodules. It is a natural problem to characterize all cross product 
bialgebras $A _{\;W'}^{\;\rho '}\bowtie _{\;R'}^{\;\sigma '}C$ that are 
equivalent to a given cross product bialgebra 
$A _{\;W}^{\;\rho}\bowtie _{\;R}^{\;\sigma }C$. We can solve here only a 
particular case of this problem, namely the case in which we assume that 
$A$ is a bialgebra and the coalgebra structure of the given cross product 
bialgebra is the tensor product coalgebra. 
\section{Preliminaries}\selabel{1}
${\;\;\;\;}$
We work over a commutative field $k$. All algebras, linear spaces
etc. will be over $k$; unadorned $\ot $ means $\ot_k$. 
By ''algebra'' (respectively ''coalgebra'') we 
always mean an associative unital algebra (respectively 
coassociative counital coalgebra). For the 
comultiplication of a coalgebra $C$, we use the version of 
Sweedler's sigma notation $\Delta (c)=c_1\ot c_2$ for 
all $c\in C$. 

We recall from \cite{Cap}, \cite{VanDaele} that, given two algebras $A$, $B$ 
and a $k$-linear map $R:B\ot A\rightarrow A\ot B$, with notation 
$R(b\ot a)=a_R\ot b_R$, for $a\in A$, $b\in B$, satisfying the conditions 
$a_R\otimes 1_R=a\otimes 1$, $1_R\otimes b_R=1\otimes b$, 
$(aa')_R\otimes b_R=a_Ra'_r\otimes b_{R_r}$, 
$a_R\otimes (bb')_R=a_{R_r}\otimes b_rb'_R$, 
for all $a, a'\in A$ and $b, b'\in B$ (where $r$ and $R$ are two different indices), 
if we define on $A\ot B$ a new multiplication, by 
$(a\ot b)(a'\ot b')=aa'_R\ot b_Rb'$, then this multiplication is associative 
with unit $1\ot 1$. In this case, the map $R$ is called 
a {\em twisting map} between $A$ and $B$ and the new algebra 
structure on $A\ot B$ is denoted by $A\ot _RB$ and called the 
{\em twisted tensor product} of $A$ and $B$ afforded by $R$.

We recall from \cite{brz} the construction of  
Brzezi\'{n}ski's crossed product:
\begin{proposition} (\cite{brz}) \label{defbrz}
Let $(A, \mu , 1_A)$ be an (associative unital) algebra and $V$ a 
vector space equipped with a distinguished element $1_V\in V$. Then 
the vector space $A\ot V$ is an associative algebra with unit $1_A\ot 1_V$ 
and whose multiplication has the property that $(a\ot 1_V)(b\ot v)=
ab\ot v$, for all $a, b\in A$ and $v\in V$, if and only if there exist 
linear maps $\sigma :V\ot V\rightarrow A\ot V$ and 
$R:V\ot A\rightarrow A\ot V$  satisfying the following conditions:
\begin{eqnarray}
&&R(1_V\ot a)=a\ot 1_V, \;\;\;R(v\ot 1_A)=1_A\ot v, \;\;\;\forall 
\;a\in A, \;v\in V, \label{brz1} \\
&&\sigma (1_V, v)=\sigma (v, 1_V)=1_A\ot v, \;\;\;\forall 
\;v\in V, \label{brz2} \\
&&R\circ (id_V\ot \mu )=(\mu \ot id_V)\circ (id_A\ot R)\circ (R\ot id_A), 
\label{brz3} \\
&&(\mu \ot id_V)\circ (id_A\ot \sigma )\circ (R\ot id_V)\circ 
(id_V\ot \sigma ) \nonumber \\
&&\;\;\;\;\;\;\;\;\;\;
=(\mu \ot id_V)\circ (id_A\ot \sigma )\circ (\sigma \ot id_V), \label{brz4} \\
&&(\mu \ot id_V)\circ (id_A\ot \sigma )\circ (R\ot id_V)\circ 
(id_V\ot R ) \nonumber \\
&&\;\;\;\;\;\;\;\;\;\;
=(\mu \ot id_V)\circ (id_A\ot R )\circ (\sigma \ot id_A). \label{brz5} 
\end{eqnarray}
If this is the case, the multiplication of $A\ot V$ is given explicitly by
\begin{eqnarray*} 
&&\mu _{A\ot V}=(\mu _2\ot id_V)\circ (id_A\ot id_A\ot \sigma )\circ 
(id_A\ot R\ot id_V),
\end{eqnarray*}
where $\mu _2=\mu \circ (id_A\ot \mu )=\mu \circ (\mu \ot id_A)$. 
We denote by $A\ot _{R, \sigma }V$ this algebra structure and 
call it the {\em crossed product} afforded by the data $(A, V, R, \sigma )$.  
\end{proposition}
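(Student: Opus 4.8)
The plan is to prove Brzezi\'{n}ski's characterization by establishing both implications of the ``if and only if'' separately, working always under the normalization condition $(a\ot 1_V)(b\ot v)=ab\ot v$.

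\medskip

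\noindent\textbf{The ``only if'' direction.} Suppose $A\ot V$ carries an associative unital algebra structure with unit $1_A\ot 1_V$ satisfying $(a\ot 1_V)(b\ot v)=ab\ot v$. First I would \emph{define} the two maps: set $\sigma(v\ot w):=(1_A\ot v)(1_A\ot w)$ and $R(v\ot a):=(1_A\ot v)(a\ot 1_V)$, both viewed as elements of $A\ot V$. The unit axioms \eqref{brz1} and \eqref{brz2} are then immediate from $1_A\ot 1_V$ being the unit and from the normalization property (e.g. $R(v\ot 1_A)=(1_A\ot v)(1_A\ot 1_V)=1_A\ot v$, and $R(1_V\ot a)=(1_A\ot 1_V)(a\ot 1_V)=a\ot 1_V$). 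Next I would derive the formula for $\mu_{A\ot V}$: using the normalization, $(a\ot v)(b\ot w)=(a\ot 1_V)(1_A\ot v)(b\ot 1_V)(1_A\ot w)=(a\ot 1_V)\big(R(v\ot b)\big)(1_A\ot w)$, and if $R(v\ot b)=b_R\ot v_R$ this equals $(ab_R\ot 1_V)(1_A\ot v_R)(1_A\ot w)=(ab_R\ot 1_V)\sigma(v_R\ot w)$, which is exactly $(\mu_2\ot id_V)\circ(id_A\ot id_A\ot\sigma)\circ(id_A\ot R\ot id_V)$ applied to $a\ot v\ot b\ot w$. Finally the four compatibility conditions \eqref{brz3}--\eqref{brz5}: each is obtained by expanding a suitable triple product in two ways using associativity. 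Concretely, \eqref{brz3} comes from comparing $\big((1_A\ot v)(a\ot 1_V)\big)(a'\ot 1_V)$ with $(1_A\ot v)\big((a\ot 1_V)(a'\ot 1_V)\big)=(1_A\ot v)(aa'\ot 1_V)$; \eqref{brz4} from the two bracketings of $(1_A\ot v)(1_A\ot w)(1_A\ot z)$; and \eqref{brz5} from the two bracketings of $(1_A\ot v)(1_A\ot w)(a\ot 1_V)$. In each case one inserts the definitions of $\sigma$ and $R$ and reads off the claimed equality of maps $V\ot V\ot V\to A\ot V$ or $V\ot V\ot A\to A\ot V$.

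\medskip

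\noindent\textbf{The ``if'' direction.} Conversely, given $\sigma$ and $R$ satisfying \eqref{brz1}--\eqref{brz5}, I would \emph{define} $\mu_{A\ot V}$ by the displayed formula and verify that it is associative and unital with the stated normalization. Unitality and the normalization property $(a\ot 1_V)(b\ot v)=ab\ot v$ follow directly from \eqref{brz1} and \eqref{brz2} (plugging $1_V$ into the appropriate slot collapses $R$ or $\sigma$ to the trivial flip). Associativity is the substantive computation: one expands $\big((a\ot v)(b\ot w)\big)(c\ot z)$ and $(a\ot v)\big((b\ot w)(c\ot z)\big)$ using the formula, obtaining in each case a sum of terms of the shape (product in $A$)$\ot$(component in $V$) built from iterated applications of $R$ and $\sigma$. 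One then transforms one side into the other using \eqref{brz3} (to move an $R$ past a product in $A$), \eqref{brz5} (to interchange $R$ and $\sigma$), and \eqref{brz4} (the ``cocycle''-type identity for $\sigma$), together with associativity of $\mu$ in $A$.

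\medskip

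I expect the main obstacle to be the associativity verification in the ``if'' direction: it is a bookkeeping-heavy diagram chase in which the five indices coming from $R$ and $\sigma$ must be tracked carefully, and the conditions \eqref{brz3}--\eqref{brz5} have to be applied in exactly the right order and to exactly the right tensor factors. A clean way to organize it is to work entirely with the maps (as in the displayed compositions) rather than with elements, so that each step is literally a substitution of one composite of maps for another; alternatively, one keeps Sweedler-type indices for $R$ and $\sigma$ and checks that the two resulting expressions for the triple product coincide term by term. Since this is essentially Brzezi\'{n}ski's original result from \cite{brz}, I would cite that computation and only indicate the key substitutions rather than reproduce every line.
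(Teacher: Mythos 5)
Your proposal is correct and follows the standard argument: defining $\sigma(v\ot w)=(1_A\ot v)(1_A\ot w)$ and $R(v\ot a)=(1_A\ot v)(a\ot 1_V)$ and extracting (\ref{brz1})--(\ref{brz5}) from associativity of suitable triple products, then conversely checking unitality from (\ref{brz1})--(\ref{brz2}) and associativity from (\ref{brz3})--(\ref{brz5}). The paper itself gives no proof of this statement --- it is recalled verbatim from \cite{brz} --- so your outline, which is exactly Brzezi\'{n}ski's original argument (with the lengthy associativity verification appropriately delegated to \cite{brz}), is consistent with the paper's treatment and contains no gap beyond that cited computation.
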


If  $A\ot _{R, \sigma }V$ is a crossed product, we introduce the 
following Sweedler-type notation:
\begin{eqnarray*}
&&R:V\ot A\rightarrow A\ot V, \;\;\;R(v\ot a)=a_R\ot v_R, \\
&&\sigma :V\ot V\rightarrow A\ot V, \;\;\;\sigma (v, v')=\sigma _1(v, v') 
\ot \sigma _2(v, v'), 
\end{eqnarray*} 
for all $v, v'\in V$ and $a\in A$. With this notation, the multiplication of 
 $A\ot _{R, \sigma }V$ reads
\begin{eqnarray*}
&&(a\ot v)(a'\ot v')=aa'_R\sigma _1(v_R, v')\ot \sigma _2(v_R, v'), \;\;\;
\forall \;a, a'\in A, \;v, v'\in V.
\end{eqnarray*}

A twisted tensor product is a particular case of a crossed product 
(cf. \cite{guccione}), namely, if $A\ot _RB$ is a twisted tensor product of 
algebras then $A\ot _RB=A\ot _{R, \sigma }B$, where 
$\sigma :B\ot B\rightarrow A\ot B$ 
is given by $\sigma (b, b')=1_A\ot bb'$, for all $b, b'\in B$. 

We recall from \cite{panaite} the invariance under twisting for crossed products:
\begin{theorem} \label{main}
Let $A\ot _{R, \sigma }V$ be a crossed product and assume 
we are given two linear maps $\theta , \gamma :V\rightarrow A\ot V$, 
with notation $\theta (v)=v_{<-1>}\ot v_{<0>}$ and 
$\gamma (v)=v_{\{-1\}}\ot v_{\{0\}}$, for all $v\in V$. Define the maps 
$R':V\ot A\rightarrow A\ot V$ and $\sigma ':V\ot V\rightarrow A\ot V$ by 
the formulae
\begin{eqnarray}
&&R'=(\mu _2\ot id_V)\circ (id_A\ot id_A\ot \gamma )\circ (id_A\ot R)\circ 
(\theta \ot id_A),   \label{Rprim}\\
&&\sigma '=(\mu \ot id_V)\circ (id_A\ot \gamma )\circ (\mu _2\ot id_V)
\circ (id_A\ot id_A\ot \sigma ) \nonumber \\
&&\;\;\;\;\;\;\;\;\;\;\;\;\circ (id_A\ot R\ot id_V)\circ 
(\theta \ot \theta  ).  \label{sigmaprim}
\end{eqnarray}
Assume that the following conditions are satisfied:
\begin{eqnarray}
&&\theta (1_V)=1_A\ot 1_V, \;\;\;\gamma (1_V)=1_A\ot 1_V, \label{cros1}\\
&&v_{<-1>}v_{<0>_{\{-1\}}}\ot v_{<0>_{\{0\}}}=1_A\ot v, 
\;\;\;\forall \;v\in V, \label{cros2}\\
&&v_{\{-1\}}v_{\{0\}_{<-1>}}\ot v_{\{0\}_{<0>}}=1_A\ot v, 
\;\;\;\forall \;v\in V, \label{cros3}\\
&&(\mu \ot id_V)\circ (\mu \ot \sigma ')\circ (id_A\ot \gamma \ot id_V)
\circ (R\ot id_V)\circ (id_V\ot \gamma )\nonumber \\
&&\;\;\;\;\;\;\;\;\;\;=(\mu \ot id_V)\circ (id_A\ot \gamma )\circ \sigma . 
\label{cros4}
\end{eqnarray}
Then  $A\ot _{R', \sigma '}V$ is a crossed product and we have an algebra 
isomorphism  $\varphi :A\ot _{R', \sigma '}V\simeq A\ot _{R, \sigma }V$, 
$\varphi (a\ot v)=av_{<-1>}\ot v_{<0>}$, for all $a\in A$, $v\in V$.  
\end{theorem}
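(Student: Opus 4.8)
The plan is to verify directly that the map $\varphi :A\ot V\to A\ot V$, $\varphi(a\ot v)=av_{<-1>}\ot v_{<0>}$, is a bijective algebra homomorphism from the proposed new structure to the old one, and then deduce that the new structure is indeed a crossed product by transporting it along $\varphi$. The reason this deduction works is the characterization in Proposition~\ref{defbrz}: an algebra structure on $A\ot V$ with unit $1_A\ot 1_V$ is a crossed product precisely when $(a\ot 1_V)(b\ot v)=ab\ot v$ holds. So the strategy is:

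\begin{enumerate}
\item[(i)] First I would show $\varphi$ is bijective. Conditions \eqref{cros2} and \eqref{cros3} are exactly the statements that the map $\psi(a\ot v)=av_{\{-1\}}\ot v_{\{0\}}$ built from $\gamma$ is a two-sided inverse of $\varphi$: composing $\varphi$ after $\psi$ and using that $\varphi$ is left $A$-linear reduces to \eqref{cros3} (and \eqref{cros2} for the other order), after one checks the left $A$-module structures match on the nose.
\item[(ii)] Next I would check $\varphi(1_A\ot 1_V)=1_A\ot 1_V$, which is immediate from \eqref{cros1}, and that $\varphi$ is left $A$-linear, which is clear from the formula.
\item[(iii)] The core computation is that $\varphi$ is multiplicative: one must show
\[
\varphi\bigl((a\ot v)\cdot_{R',\sigma'}(a'\ot v')\bigr)=\varphi(a\ot v)\cdot_{R,\sigma}\varphi(a'\ot v')
\]
for all $a,a',v,v'$. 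Expanding the left side uses the explicit formulas \eqref{Rprim} and \eqref{sigmaprim} for $R'$ and $\sigma'$; expanding the right side uses the multiplication rule of $A\ot_{R,\sigma}V$ recalled after Proposition~\ref{defbrz}. The map $\sigma'$ was literally designed so that, after one pushes $\theta$ through $R$ via \eqref{brz5}-type manipulations, the two sides coincide; condition \eqref{cros4} is the one extra relation needed to make the $\gamma$-part of $R'$ interact correctly with $\sigma$.
\item[(iv)] Finally, since $\varphi$ is an algebra isomorphism onto the crossed product $A\ot_{R,\sigma}V$, the source $(A\ot V,\cdot_{R',\sigma'})$ is an associative unital algebra with unit $1_A\ot 1_V$; and since $(a\ot 1_V)\cdot_{R',\sigma'}(b\ot v)=ab\ot v$ is visible directly from \eqref{Rprim}, \eqref{sigmaprim} together with \eqref{cros1} (the maps $R'$ and $\sigma'$ reduce correctly when one slot is $1_V$), Proposition~\ref{defbrz} yields that $A\ot_{R',\sigma'}V$ is a crossed product.
\end{enumerate}

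The main obstacle is step (iii): it is a lengthy diagram chase through \eqref{brz1}--\eqref{brz5} and \eqref{cros1}--\eqref{cros4}, and the bookkeeping with the several Sweedler-type indices ($R$, $r$ for the twisting maps, $\langle-\rangle$ for $\theta$, $\{\,\}$ for $\gamma$) is where errors are easy to make; in particular one has to apply \eqref{brz3} to split an $R$ applied to a product, and \eqref{brz5} to move a $\sigma$ past an $R$, in exactly the right order so that the $\theta$'s land where the definition of $\sigma'$ expects them. A secondary subtlety is making sure the normalization conditions \eqref{brz1}, \eqref{brz2} for $R'$, $\sigma'$ — needed implicitly for $A\ot_{R',\sigma'}V$ to even be well-posed as a crossed product — follow from \eqref{cros1}; but these are short once (iii) is in hand, or can be folded into (iv).
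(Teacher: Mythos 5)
Your overall architecture --- construct $\varphi$, invert it using $\gamma$, transport associativity and the unit, then invoke Proposition \ref{defbrz} --- is sound; note that this paper only quotes Theorem \ref{main} from \cite{panaite} without proof, so the natural benchmark here is the proof of Theorem \ref{mainconverse}, whose computations are essentially yours run backwards. Steps (i) and (ii) are fine: $\psi(a\ot v)=av_{\{-1\}}\ot v_{\{0\}}$ is a two-sided inverse by \eqref{cros2} and \eqref{cros3}. The genuine gap is step (iii), which you yourself identify as the entire content of the theorem but do not carry out: you assert that the two sides coincide after ``\eqref{brz5}-type manipulations'' and that \eqref{cros4} is what makes the $\gamma$-part of $R'$ interact with $\sigma$. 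Neither claim is substantiated, and the second is misleading: the cancellations that close such computations are \eqref{cros3} (a $\gamma$ immediately followed by a $\theta$ collapses) together with \eqref{brz3} and \eqref{brz1}; in fact \eqref{cros4} is not needed for this implication at all, since given \eqref{cros1}--\eqref{cros3} and the definition \eqref{sigmaprim} it is automatically satisfied --- this is exactly the final computation in the paper's proof of Theorem \ref{mainconverse}, which uses only \eqref{sigmaprim}, \eqref{cros3}, \eqref{brz1}, \eqref{brz3} and the injectivity of $\varphi$. A smaller inaccuracy: the normalizations and the identity $(a\ot 1_V)\cdot_{R',\sigma'}(b\ot v)=ab\ot v$ invoked in step (iv) need \eqref{cros2} together with \eqref{brz1}, \eqref{brz2}, not \eqref{cros1} alone (for instance $\sigma'(1_V,v)=1_A\ot v$ and $\sigma'(v,1_V)=1_A\ot v$ both come down to \eqref{cros2}).

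You can close the gap without the heavy general multiplicativity check by reorganizing. Since $\varphi$ is bijective by (i), define $x\ast y:=\varphi^{-1}(\varphi(x)\varphi(y))$; this product is automatically associative with unit $1_A\ot 1_V$, and $\varphi$ is an algebra isomorphism for it. Three short computations then determine $\ast$ completely: $(a\ot 1_V)\ast(b\ot v)=ab\ot v$ (using \eqref{cros1} and \eqref{cros2}), and $(1_A\ot v)\ast(a\ot 1_V)=R'(v\ot a)$, $(1_A\ot v)\ast(1_A\ot w)=\sigma'(v,w)$ (using only the definitions of $\varphi ,\varphi^{-1}$, \eqref{cros1} and \eqref{brz2}; these are literally the displayed computations of $R'$ and $\sigma'$ in the proof of Theorem \ref{mainconverse}, read in reverse). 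Writing $a\ot v=(a\ot 1_V)\ast(1_A\ot v)$ and regrouping by associativity gives $(a\ot v)\ast(a'\ot v')=aa'_{R'}\sigma'_1(v_{R'},v')\ot \sigma'_2(v_{R'},v')$, so $\ast$ coincides with the $R',\sigma'$-multiplication; Proposition \ref{defbrz} applied to $(A\ot V,\ast)$, whose recovered structure maps are precisely $R'$ and $\sigma'$, then yields that $A\ot_{R',\sigma'}V$ is a crossed product, with $\varphi$ an algebra isomorphism onto $A\ot_{R,\sigma}V$. As submitted, your text is a plausible plan, but the decisive verification is deferred, so it is not yet a proof.
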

\section{The characterization of equivalent crossed products}
\setcounter{equation}{0}
\begin{definition}
Let $(A, \mu , 1_A)$ be an (associative unital) algebra, $V$ a 
vector space equipped with a distinguished element $1_V\in V$ and 
$A\ot _{R, \sigma }V$, $A\ot _{R', \sigma '}V$ two crossed products. 
We say that $A\ot _{R, \sigma }V$ and $A\ot _{R', \sigma '}V$ are 
{\bf equivalent} if there exists an algebra isomorphism 
$\varphi :A\ot _{R', \sigma '}V\simeq A\ot _{R, \sigma }V$ with 
the property that $\varphi (a\ot 1_V)=a\ot 1_V$ for all $a\in A$. 
\end{definition}
\begin{remark}
A crossed product algebra $A\ot _{R, \sigma }V$ becomes canonically 
a left $A$-module, by 
\begin{eqnarray*}
&&A\ot (A\ot _{R, \sigma }V)\rightarrow 
A\ot _{R, \sigma }V, \;\;\; a'\ot (a\ot v)\mapsto a'a\ot v.
\end{eqnarray*}
Then it is easy to see that two crossed products 
$A\ot _{R, \sigma }V$ and $A\ot _{R', \sigma '}V$ are 
equivalent if and only if there exists a linear isomorphism 
$\varphi :A\ot _{R', \sigma '}V\simeq A\ot _{R, \sigma }V$ 
which is an algebra map and a morphism of left $A$-modules. 
\end{remark}
\begin{theorem} \label{mainconverse}
Let $A\ot _{R, \sigma }V$ and $A\ot _{R', \sigma '}V$ be two crossed 
products. Then $A\ot _{R, \sigma }V$ and $A\ot _{R', \sigma '}V$ are 
equivalent if and only if there exist linear maps 
$\theta , \gamma :V\rightarrow A\ot V$, 
with notation $\theta (v)=v_{<-1>}\ot v_{<0>}$ and 
$\gamma (v)=v_{\{-1\}}\ot v_{\{0\}}$, for all $v\in V$, such that 
the conditions (\ref{Rprim})--(\ref{cros4}) are satisfied. 
\end{theorem}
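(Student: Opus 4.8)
The plan is to prove the two implications separately. The ``if'' direction is already essentially contained in \thref{main}: if linear maps $\theta ,\gamma :V\ra A\ot V$ satisfying (\ref{Rprim})--(\ref{cros4}) are given, then by that theorem $A\ot _{R',\sigma '}V$ is a crossed product and the map $\varphi (a\ot v)=av_{<-1>}\ot v_{<0>}$ is an algebra isomorphism onto $A\ot _{R,\sigma }V$. It remains only to check that $\varphi (a\ot 1_V)=a\ot 1_V$, which is immediate from the first half of (\ref{cros1}), namely $\theta (1_V)=1_A\ot 1_V$. Hence the crossed products are equivalent in the sense of the Definition.

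For the ``only if'' direction, I would start from an algebra isomorphism $\varphi :A\ot _{R',\sigma '}V\simeq A\ot _{R,\sigma }V$ with $\varphi (a\ot 1_V)=a\ot 1_V$, and construct the candidate maps $\theta ,\gamma $ explicitly. The natural definition is $\theta (v)=\varphi (1_A\ot v)$ and $\gamma (v)=\varphi ^{-1}(1_A\ot v)$, so that $\theta (v)=v_{<-1>}\ot v_{<0>}$ records $\varphi$ on the ``second tensorand'' and $\gamma$ does the same for $\varphi ^{-1}$. Because $\varphi$ is a morphism of left $A$-modules (Remark), one recovers $\varphi (a\ot v)=a\cdot \varphi (1_A\ot v)=av_{<-1>}\ot v_{<0>}$ and likewise $\varphi ^{-1}(a\ot v)=av_{\{-1\}}\ot v_{\{0\}}$; this is the key structural observation that makes everything computable. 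Conditions (\ref{cros1}), (\ref{cros2}), (\ref{cros3}) then fall out quickly: (\ref{cros1}) is just $\varphi (1_A\ot 1_V)=1_A\ot 1_V$ and $\varphi ^{-1}(1_A\ot 1_V)=1_A\ot 1_V$ (unitality), while (\ref{cros2}) and (\ref{cros3}) are exactly the two statements $\varphi ^{-1}\circ \varphi =id$ and $\varphi \circ \varphi ^{-1}=id$ applied to $1_A\ot v$, written out in the Sweedler notation for $\theta $ and $\gamma $.

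The substantive work is to show that the multiplicativity of $\varphi$ forces the formulas (\ref{Rprim}) and (\ref{sigmaprim}) for $R'$ and $\sigma '$, as well as the compatibility (\ref{cros4}). For (\ref{Rprim}): apply $\varphi$ to the product $(1_A\ot v)(a\ot 1_V)$ computed in $A\ot _{R',\sigma '}V$, which equals $a'_{R'}\ot v_{R'}$ where we write $R'(v\ot a)=a'_{R'}\ot v_{R'}$ (using $\sigma '(v,1_V)=1_A\ot v$); on the other hand $\varphi$ of this product equals the product $\varphi (1_A\ot v)\varphi (a\ot 1_V)=(v_{<-1>}\ot v_{<0>})(a\ot 1_V)$ in $A\ot _{R,\sigma }V$, which by the multiplication rule of the latter is $v_{<-1>}a_R\ot v_{<0>_R}$. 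Applying $\varphi ^{-1}$ and using the $A$-module property to peel off the left factor yields precisely (\ref{Rprim}). Similarly, applying $\varphi$ to $(1_A\ot v)(1_A\ot v')$ and expanding both sides gives (\ref{sigmaprim}); and applying $\varphi$ to the associativity-type identity coming from $(1_A\ot v)(a\ot 1_V)(1_A\ot v')$ or, more directly, comparing the two ways of evaluating the effect of $\varphi$ on a mixed triple product, produces (\ref{cros4}). I expect the main obstacle to be purely bookkeeping: keeping the three sets of Sweedler indices ($R$/$\sigma$ versus $R'$/$\sigma '$ versus $\theta$/$\gamma$) straight while expanding the crossed-product multiplications on both sides, and making sure each of the four conditions is matched to exactly the right product or identity. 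Once the dictionary $\theta (v)=\varphi (1_A\ot v)$, $\gamma (v)=\varphi ^{-1}(1_A\ot v)$ is in place, there is no conceptual difficulty, only careful diagram-chasing that mirrors, in reverse, the computations already carried out in \cite{panaite} to prove \thref{main}.
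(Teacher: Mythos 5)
Your treatment of the ``if'' direction and, in the converse direction, your dictionary $\theta (v)=\varphi (1_A\ot v)$, $\gamma (v)=\varphi ^{-1}(1_A\ot v)$, together with the derivations of (\ref{cros1})--(\ref{cros3}) from unitality and from $\varphi ^{-1}\circ \varphi =id$, $\varphi \circ \varphi ^{-1}=id$, and of (\ref{Rprim}), (\ref{sigmaprim}) by applying $\varphi $ to $(1_A\ot v)(a\ot 1_V)$ and $(1_A\ot v)(1_A\ot w)$ and then $\varphi ^{-1}$, are exactly the paper's argument.

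The genuine gap is your treatment of (\ref{cros4}), which is the only condition that does not come from evaluating $\varphi $ on a single product of generators, and the only place in the converse where real work is needed. The mechanism you name --- ``the associativity-type identity coming from $(1_A\ot v)(a\ot 1_V)(1_A\ot v')$'' --- cannot produce (\ref{cros4}): associativity of either crossed product is automatic, and that triple product involves neither $\gamma $ nor the compatibility of $\gamma $ with $\sigma $ that (\ref{cros4}) encodes; applying $\varphi $ to it only recycles consequences of (\ref{Rprim}) and (\ref{sigmaprim}) that you have already established. What the paper actually does is use injectivity of $\varphi $: it composes both sides of (\ref{cros4}) with $\varphi $ and shows, by a substantial computation using (\ref{sigmaprim}), two applications of (\ref{cros3}), then (\ref{brz3}) and (\ref{brz1}), that each side composed with $\varphi $ equals $\sigma (v\ot w)$. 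If you prefer a structural argument in the spirit of your ``mixed product'' remark, you must make the identification explicit: the left-hand side of (\ref{cros4}) at $v\ot w$ equals $\varphi ^{-1}\bigl((1_A\ot v)(w_{\{-1\}}\ot 1_V)\bigr)$ (product taken in $A\ot _{R,\sigma }V$) multiplied on the right, in $A\ot _{R',\sigma '}V$, by $1_A\ot w_{\{0\}}$ (here one uses (\ref{brz1})--(\ref{brz2}) for the primed data), while the right-hand side equals $\varphi ^{-1}\bigl((1_A\ot v)(1_A\ot w)\bigr)$; multiplicativity of $\varphi ^{-1}$, the property $\varphi ^{-1}(a\ot 1_V)=a\ot 1_V$, and the crossed-product identity $(w_{\{-1\}}\ot 1_V)(1_A\ot w_{\{0\}})=w_{\{-1\}}\ot w_{\{0\}}=\varphi ^{-1}(1_A\ot w)$ then give the equality. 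Some such explicit verification is indispensable; as written, your sketch does not establish (\ref{cros4}).
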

\begin{proof}
One implication is exactly Theorem \ref{main}, so we only have to prove 
the converse. Assume that there exists an algebra isomorphism 
$\varphi :A\ot _{R', \sigma '}V\simeq A\ot _{R, \sigma }V$ such that  
$\varphi (a\ot 1_V)=a\ot 1_V$ for all $a\in A$, with inverse denoted by 
$\varphi ^{-1}:A\ot _{R, \sigma }V\simeq A\ot _{R', \sigma '}V$. 
Define the maps
\begin{eqnarray}
&&\theta :V\rightarrow A\ot V, \;\;\;\theta (v)=\varphi (1_A\ot v):=
v_{<-1>}\ot v_{<0>}, \\
&&\gamma :V\rightarrow A\ot V, \;\;\;\gamma (v)=\varphi ^{-1}(1_A\ot v):=
v_{\{-1\}}\ot v_{\{0\}}. 
\end{eqnarray}
Since $\varphi $ is an algebra isomorphism, for all $a\in A$, $v\in V$ we have 
$\varphi (a\ot v)=\varphi ((a\ot 1_V)(1_A\ot v))=\varphi (a\ot 1_V)
\varphi (1_A\ot v)=(a\ot 1_V)(v_{<-1>}\ot v_{<0>})=
av_{<-1>}\ot v_{<0>}$ and similarly $\varphi ^{-1}(a\ot v)=
av_{\{-1\}}\ot v_{\{0\}}$. Since $\varphi (1_A\ot 1_V)=
\varphi ^{-1}(1_A\ot 1_V)=1_A\ot 1_V$ we obtain 
$\theta (1_V)=\gamma (1_V)=1_A\ot 1_V$, that is (\ref{cros1}) holds. 
Now we compute, for $v\in V$:
\begin{eqnarray*}
&&1_A\ot v=\varphi ^{-1}(\varphi (1_A\ot v))=
\varphi ^{-1}(v_{<-1>}\ot v_{<0>})=
v_{<-1>}v_{<0>_{\{-1\}}}\ot v_{<0>_{\{0\}}}, \\
&&1_A\ot v=\varphi (\varphi ^{-1}(1_A\ot v))=
\varphi (v_{\{-1\}}\ot v_{\{0\}})=
v_{\{-1\}}v_{\{0\}_{<-1>}}\ot v_{\{0\}_{<0>}}, 
\end{eqnarray*}
that is (\ref{cros2}) and (\ref{cros3}) hold. Let now $v\in V$, $a\in A$; 
we compute:
\begin{eqnarray*}
R'(v\ot a)&=&\varphi ^{-1}(\varphi (R'(v\ot a)))\\
&=&\varphi ^{-1}(\varphi ((1_A\ot v)(a\ot 1_V)))\\
&=&\varphi ^{-1}(\varphi (1_A\ot v)\varphi (a\ot 1_V))\\
&=&\varphi ^{-1}((v_{<-1>}\ot v_{<0>})(a\ot 1_V))\\
&=&\varphi ^{-1}(v_{<-1>}a_R\ot v_{<0>_R})\\
&=&v_{<-1>}a_Rv_{<0>_{R_{\{-1\}}}}\ot 
v_{<0>_{R_{\{0\}}}},
\end{eqnarray*}
and this is exactly the relation (\ref{Rprim}).  Let now 
$v, w\in V$; we compute:
\begin{eqnarray*}
\sigma '(v, w)&=&\varphi ^{-1}(\varphi (\sigma '(v, w)))\\
&=&\varphi ^{-1}(\varphi ((1_A\ot v)(1_A\ot w)))\\
&=&\varphi ^{-1}(\varphi (1_A\ot v)\varphi (1_A\ot w))\\
&=&\varphi ^{-1}((v_{<-1>}\ot v_{<0>})(w_{<-1>}\ot w_{<0>}))\\
&=&\varphi ^{-1}(v_{<-1>}w_{<-1>_R}\sigma _1(v_{<0>_R}, 
w_{<0>})\ot \sigma _2(v_{<0>_R}, w_{<0>}))\\
&=&v_{<-1>}w_{<-1>_R}\sigma _1(v_{<0>_R}, 
w_{<0>})\sigma _2(v_{<0>_R}, w_{<0>})_{\{-1\}}\ot 
\sigma _2(v_{<0>_R}, w_{<0>})_{\{0\}},
\end{eqnarray*}
and this is exactly the relation (\ref{sigmaprim}). 

The only thing left to prove is the relation (\ref{cros4}). In order to 
show that the left hand side and the right hand side in 
(\ref{cros4}) are equal, it is enough to prove that they are 
equal after composing with $\varphi $. Thus, for 
$v, w\in V$, we compute (denoting by $r$ another copy of $R$):\\[2mm]
${\;\;\;\;\;}$$\varphi \circ (\mu \ot id_V)\circ (\mu \ot \sigma ')\circ 
(id_A\ot \gamma \ot id_V)\circ (R\ot id_V)\circ (id_V\ot \gamma )(v\ot w)$
\begin{eqnarray*}
&=&\varphi \circ (\mu \ot id_V)\circ (\mu \ot \sigma ')\circ 
(id_A\ot \gamma \ot id_V)\circ (R\ot id_V)(v\ot w_{\{-1\}}\ot w_{\{0\}})\\
&=&\varphi \circ (\mu \ot id_V)\circ (\mu \ot \sigma ')\circ 
(id_A\ot \gamma \ot id_V)(w_{\{-1\}_R}\ot v_R\ot w_{\{0\}})\\
&=&\varphi \circ (\mu \ot id_V)\circ (\mu \ot \sigma ')
(w_{\{-1\}_R}\ot v_{R_{\{-1\}}}\ot  v_{R_{\{0\}}}\ot 
w_{\{0\}})\\
&\overset{(\ref{sigmaprim})}{=}&
\varphi \circ (\mu \ot id_V)(w_{\{-1\}_R}v_{R_{\{-1\}}}\ot 
v_{R_{\{0\}_{<-1>}}}w_{\{0\}_{<-1>_r}}
\sigma _1(v_{R_{\{0\}_{<0>_r}}}, w_{\{0\}_{<0>}})\\
&&\sigma _2(v_{R_{\{0\}_{<0>_r}}}, w_{\{0\}_{<0>}})_{\{-1\}}
\ot \sigma _2(v_{R_{\{0\}_{<0>_r}}}, w_{\{0\}_{<0>}})_{\{0\}})\\
&=&\varphi (w_{\{-1\}_R}v_{R_{\{-1\}}}
v_{R_{\{0\}_{<-1>}}}w_{\{0\}_{<-1>_r}}
\sigma _1(v_{R_{\{0\}_{<0>_r}}}, w_{\{0\}_{<0>}})\\
&&\sigma _2(v_{R_{\{0\}_{<0>_r}}}, w_{\{0\}_{<0>}})_{\{-1\}}
\ot \sigma _2(v_{R_{\{0\}_{<0>_r}}}, w_{\{0\}_{<0>}})_{\{0\}})\\
&=&w_{\{-1\}_R}v_{R_{\{-1\}}}
v_{R_{\{0\}_{<-1>}}}w_{\{0\}_{<-1>_r}}
\sigma _1(v_{R_{\{0\}_{<0>_r}}}, w_{\{0\}_{<0>}})\\
&&\sigma _2(v_{R_{\{0\}_{<0>_r}}}, w_{\{0\}_{<0>}})_{\{-1\}}
\sigma _2(v_{R_{\{0\}_{<0>_r}}}, w_{\{0\}_{<0>}})_{\{0\}_{<-1>}}\\
&&\ot 
\sigma _2(v_{R_{\{0\}_{<0>_r}}}, w_{\{0\}_{<0>}})_{\{0\}_{<0>}}\\
&\overset{(\ref{cros3})}{=}&
w_{\{-1\}_R}v_{R_{\{-1\}}}
v_{R_{\{0\}_{<-1>}}}w_{\{0\}_{<-1>_r}}
\sigma _1(v_{R_{\{0\}_{<0>_r}}}, w_{\{0\}_{<0>}})\\
&&\ot \sigma _2(v_{R_{\{0\}_{<0>_r}}}, w_{\{0\}_{<0>}})\\
&\overset{(\ref{cros3})}{=}&
w_{\{-1\}_R}w_{\{0\}_{<-1>_r}}
\sigma _1(v_{R_r}, w_{\{0\}_{<0>}})
\ot \sigma _2(v_{R_r}, w_{\{0\}_{<0>}})\\
&\overset{(\ref{brz3})}{=}&
(w_{\{-1\}}w_{\{0\}_{<-1>}})_R
\sigma _1(v_R, w_{\{0\}_{<0>}})
\ot \sigma _2(v_R, w_{\{0\}_{<0>}})\\
&\overset{(\ref{cros3})}{=}&
(1_A)_R\sigma _1(v_R, w)\ot \sigma _2(v_R, w)\\
&\overset{(\ref{brz1})}{=}&\sigma (v\ot w), 
\end{eqnarray*}
${\;\;\;\;\;}$$\varphi \circ (\mu \ot id_V)\circ (id_A\ot \gamma )\circ \sigma (v\ot w)$
\begin{eqnarray*}
&=&\varphi \circ (\mu \ot id_V)\circ (id_A\ot \gamma )(\sigma _1(v, w)
\ot \sigma _2(v, w))\\
&=&\varphi \circ (\mu \ot id_V)(\sigma _1(v, w)\ot \sigma _2(v, w)_{\{-1\}}
\ot \sigma _2(v, w)_{\{0\}})\\
&=&\varphi (\sigma _1(v, w)\sigma _2(v, w)_{\{-1\}}
\ot \sigma _2(v, w)_{\{0\}})\\
&=&\sigma _1(v, w)\sigma _2(v, w)_{\{-1\}}\sigma _2(v, w)_{\{0\}_{<-1>}}
\ot \sigma _2(v, w)_{\{0\}_{<0>}}\\
&\overset{(\ref{cros3})}{=}&\sigma _1(v, w)\ot \sigma _2(v, w)\\
&=&\sigma (v\ot w), 
\end{eqnarray*}
and we can see that the two terms are equal. 
\end{proof}

With a similar proof, we can obtain a converse of the invariance under 
twisting for twisted tensor products of algebras from \cite{jlpvo}:
\begin{definition}
Let $A$ be an associative unital algebra and $B$, $B'$ two associative unital 
algebra structures with the same unit $1_B$ on the vector space $B$. 
If $A\ot _RB$ and $A\ot _{R'}B'$ are two twisted tensor products of 
algebras, 
we call them $A$-equivalent if there exists an algebra isomorphism 
$\varphi :A\ot _{R'}B'\simeq A\ot _RB$ such that $\varphi (a\ot 1_B)=
a\ot 1_B$ for all $a\in A$. 
\end{definition}
\begin{theorem}
Let $A$ be an associative unital algebra and $B$, $B'$ two associative unital 
algebra structures with the same unit $1_B$ on the vector space $B$. 
Denote the multiplication of $B$ by $b\ot b'\mapsto bb'$ and the 
multiplication of $B'$ by $b\ot b'\mapsto b*b'$. 
If $A\ot _RB$ and $A\ot _{R'}B'$ are two twisted tensor products 
of algebras, then they are $A$-equivalent if and only if there exist 
two linear maps 
$\theta ,\gamma :B\rightarrow A\ot B$, with notation $\theta
(b)=b_{<-1>}\ot b_{<0>}$ and $\gamma (b)=b_{\{-1\}}\ot b_{\{0\}}$,
such that $\theta $ is an algebra map from $B'$ to $A\ot _RB$,
$\gamma (1_B)=1_A\ot 1_B$ and for all $a\in A$ and $b, b'\in B$ we have:
\begin{eqnarray}
&&\gamma (bb')=b'_{\{-1\}_R}b_{R_{\{-1\}}}\ot
b_{R_{\{0\}}}*b'_{\{0\}},
\label{rel1} \\
&&b_{<-1>}b_{<0>_{\{-1\}}}\ot b_{<0>_{\{0\}}}=1\ot b, \label{rel2}\\
&&b_{\{-1\}}b_{\{0\}_{<-1>}}\ot b_{\{0\}_{<0>}}=1\ot b,  \label{rel3}\\
&&R'(b\ot a)=b_{<-1>}a_R
b_{<0>_{R_{\{-1\}}}}\ot b_{<0>_{R_{\{0\}}}}. \label{rel4}
\end{eqnarray}
\end{theorem}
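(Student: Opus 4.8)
The plan is to specialize the proof of \thref{mainconverse} to the twisted tensor product setting and then repackage the resulting conditions using the fact that, for a twisted tensor product $A\ot_R B$, the associated crossed product data has $\sigma(b,b')=1_A\ot bb'$. One implication should be a direct consequence of the invariance under twisting for twisted tensor products from \cite{jlpvo} (or, alternatively, can be extracted from \thref{main} by substituting the special $\sigma$ and $\sigma'$), so the real work is the converse. Suppose $\varphi:A\ot_{R'}B'\simeq A\ot_R B$ is an algebra isomorphism with $\varphi(a\ot 1_B)=a\ot 1_B$. I would define $\theta(b)=\varphi(1_A\ot b)=b_{<-1>}\ot b_{<0>}$ and $\gamma(b)=\varphi^{-1}(1_A\ot b)=b_{\{-1\}}\ot b_{\{0\}}$ exactly as in the proof of \thref{mainconverse}, and record that $\varphi(a\ot b)=ab_{<-1>}\ot b_{<0>}$ and $\varphi^{-1}(a\ot b)=ab_{\{-1\}}\ot b_{\{0\}}$.

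Next I would run through the four conditions. That $\gamma(1_B)=1_A\ot 1_B$ follows from $\varphi^{-1}(1_A\ot 1_B)=1_A\ot 1_B$. Relations (\ref{rel2}) and (\ref{rel3}) are obtained exactly as (\ref{cros2}) and (\ref{cros3}) were, by computing $1_A\ot b=\varphi^{-1}(\varphi(1_A\ot b))$ and $1_A\ot b=\varphi(\varphi^{-1}(1_A\ot b))$. For (\ref{rel4}), I would compute $R'(b\ot a)=\varphi^{-1}(\varphi((1_A\ot b)(a\ot 1_B)))=\varphi^{-1}(\varphi(1_A\ot b)\varphi(a\ot 1_B))=\varphi^{-1}((b_{<-1>}\ot b_{<0>})(a\ot 1_B))$; since in $A\ot_R B$ we have $(b_{<-1>}\ot b_{<0>})(a\ot 1_B)=b_{<-1>}a_R\ot b_{<0>_R}$, applying $\varphi^{-1}$ gives $b_{<-1>}a_R b_{<0>_{R_{\{-1\}}}}\ot b_{<0>_{R_{\{0\}}}}$, which is (\ref{rel4}). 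The statement that $\theta$ is an algebra map from $B'$ to $A\ot_R B$ comes from computing $\theta(b*b')=\varphi(1_A\ot(b*b'))=\varphi((1_A\ot b)(1_A\ot b'))=\varphi(1_A\ot b)\varphi(1_A\ot b')=(b_{<-1>}\ot b_{<0>})(b'_{<-1>}\ot b'_{<0>})$, the product now taken in $A\ot_R B$; together with $\theta(1_B)=\varphi(1_A\ot 1_B)=1_A\ot 1_B$ this says precisely that $\theta$ is a morphism of algebras from $B'$ to $A\ot_R B$. Finally, for (\ref{rel1}) I would compute $\gamma(bb')=\varphi^{-1}(1_A\ot bb')=\varphi^{-1}((1_A\ot b)(1_A\ot b'))$ where the product is taken in $A\ot_R B'$ (i.e.\ with the $*$-multiplication on $B$ and twisting $R'$); but the crossed-product formula (\ref{sigmaprim}) already governs $\varphi^{-1}((1_A\ot b)(1_A\ot b'))$ in general, and specializing the crossed product data for $A\ot_{R'}B'$ (where $\sigma'(b,b')=1_A\ot b*b'$) and using $R'$ as given by (\ref{rel4}) collapses the general expression to $b'_{\{-1\}_R}b_{R_{\{-1\}}}\ot b_{R_{\{0\}}}*b'_{\{0\}}$ after an application of (\ref{rel3}).

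For the reverse implication, given $\theta,\gamma$ satisfying the four listed conditions, I would set $\varphi(a\ot b)=ab_{<-1>}\ot b_{<0>}$ and check it is a bijective algebra map from $A\ot_{R'}B'$ to $A\ot_R B$ fixing $A\ot 1_B$; the inverse is $\psi(a\ot b)=ab_{\{-1\}}\ot b_{\{0\}}$, with $\varphi\psi=\psi\varphi=\mathrm{id}$ forced by (\ref{rel2}) and (\ref{rel3}). That $\varphi$ is unital follows from $\theta(1_B)=1_A\ot 1_B$; that it is multiplicative reduces, after expanding the product in $A\ot_{R'}B'$ using (\ref{rel4}) for $R'$ and the $*$-multiplication, to the statement that $\theta$ is an algebra map together with the compatibility (\ref{rel1}) between $\gamma$ and the two products. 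Alternatively, one can simply verify that the specialized data $(\theta,\gamma)$ meets hypotheses (\ref{cros1})--(\ref{cros4}) of \thref{main} — with the crossed product for $A\ot_R B$ being the one with $\sigma(b,b')=1_A\ot bb'$ — and then the conclusion of \thref{main} gives the required algebra isomorphism directly; one must only check that the $\sigma'$ produced by (\ref{sigmaprim}) is $\sigma'(b,b')=1_A\ot b*b'$, so that $A\ot_{R',\sigma'}B=A\ot_{R'}B'$.

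The main obstacle I expect is bookkeeping in the derivation of (\ref{rel1}) and in verifying (\ref{cros4}) in the specialized setting: one has to carefully track the nested Sweedler-type subscripts and see that the $\sigma$-terms disappear because $\sigma$ here is just $1_A\ot(-)(-)$, which converts the $\sigma_2$-multiplications into ordinary multiplications in $B$ and lets (\ref{rel3}) be applied repeatedly. Once it is recognized that a twisted tensor product is the crossed product with trivial $\sigma$, essentially every step of the proof of \thref{mainconverse} transcribes verbatim with this substitution, so no genuinely new idea is needed — only the translation between the ``crossed product'' language of (\ref{Rprim})--(\ref{cros4}) and the ``twisted tensor product'' language of (\ref{rel1})--(\ref{rel4}), where ``$\theta$ is an algebra map $B'\to A\ot_R B$'' is the incarnation of (\ref{sigmaprim}) and (\ref{rel1}) is the incarnation of (\ref{cros4}).
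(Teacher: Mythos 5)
Your proposal is correct and takes essentially the same approach as the paper: the paper offers no separate argument for this theorem, saying only that it follows ``with a similar proof'' to Theorem \ref{mainconverse}, and your plan --- define $\theta (b)=\varphi (1_A\ot b)$, $\gamma (b)=\varphi ^{-1}(1_A\ot b)$, derive (\ref{rel2})--(\ref{rel4}) and the algebra-map property of $\theta $ exactly as in that proof, and obtain the converse from Theorem \ref{main} after checking that the $\sigma '$ of (\ref{sigmaprim}) collapses to $1_A\ot b*b'$ --- is precisely that specialization with $\sigma (b,b')=1_A\ot bb'$. One wording slip only: the product $(1_A\ot b)(1_A\ot b')$ that equals $1_A\ot bb'$ is the one in $A\ot _RB$ (the $*$-multiplication and $R'$ enter only after applying $\varphi ^{-1}$, giving $\gamma (bb')=\gamma (b)\gamma (b')$ in $A\ot _{R'}B'$), and your subsequent reduction of (\ref{rel1}) via (\ref{rel4}) and one application of (\ref{rel3}) is indeed the right computation.
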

\begin{remark}
The same type of result as in Theorem \ref{mainconverse} was proved in 
\cite{am1} for bicrossed products of Hopf algebras. 
\end{remark}
\section{Equivalent cross product bialgebras}
\setcounter{equation}{0}
${\;\;\;\;}$
We begin by recalling the dual concept of Brzezi\'{n}ski's crossed products, 
which appears (in a slightly different form) in \cite{BDII}:
\begin{proposition}
Let $(C, \Delta _C, \varepsilon _C)$ be a (coassociative counital) 
coalgebra and $X$ a vector space endowed with a linear map 
$\varepsilon _X:X\rightarrow k$. Then the vector space 
$X\ot C$ is a coassociative coalgebra with counit 
$\varepsilon _X\ot \varepsilon _C$ and whose comultiplication  
$\Delta $ has the property that 
$(id_X\ot id_C\ot \varepsilon _X\ot id_C)\circ \Delta =id_X\ot \Delta _C$ 
if and only if there exist linear maps $W:X\ot C\rightarrow C\ot X$ and 
$\rho :X\ot C\rightarrow X\ot X$ satisfying the following conditions:
\begin{eqnarray}
&&(id_C\ot \varepsilon _X)\circ W=\varepsilon _X\ot id_C, \;\;\;
(\varepsilon _C\ot id_X)\circ W=id_X\ot \varepsilon _C, 
\label{cobrz1}\\
&&(id_X\ot \varepsilon _X)\circ \rho =id_X\ot \varepsilon _C, \;\;\;
(\varepsilon _X\ot id_X)\circ \rho =id_X\ot \varepsilon _C, 
\label{cobrz2}\\
&&(\Delta _C\ot id_X)\circ W=(id_C\ot W)\circ (W\ot id_C)
\circ (id_X\ot \Delta _C), \label{cobrz3} \\ 
&&(\rho \ot id_X)\circ (id_X\ot W)\circ (\rho \ot id_C)\circ 
(id_X\ot \Delta _C)\nonumber \\
&&\;\;\;\;\;\;\;\;\;\;
=(id_X\ot \rho )\circ (\rho \ot id_C)\circ 
(id_X\ot \Delta _C), \label{cobrz4} \\
&&(W\ot id_X)\circ (id_X\ot W)\circ (\rho \ot id_C)\circ 
(id_X\ot \Delta _C)\nonumber \\
&&\;\;\;\;\;\;\;\;\;\;
=(id_C\ot \rho )\circ (W\ot id_C)\circ (id_X\ot \Delta _C). 
\label{cobrz5}
\end{eqnarray}
In this case, the comultiplication of $X\ot C$  is given 
explicitely by 
\begin{eqnarray}
&&\Delta =(id_X\ot W\ot id_C)\circ (\rho \ot \Delta _C)
\circ (id_X\ot \Delta _C). \label{formco}
\end{eqnarray}
We denote by $X_{\;W, \rho }\ot C$ this coalgebra 
structure and call it the {\em crossed coproduct} afforded 
by the data $(X, C, W, \rho )$. 
\end{proposition}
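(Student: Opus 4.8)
The plan is to recognize that this proposition is precisely the formal dual of Proposition~\ref{defbrz}, obtained by reversing every arrow and trading the data $(\mu, 1_A, 1_V, R, \sigma)$ for $(\Delta_C, \va_C, \va_X, W, \rho)$. Under this dictionary the normalization condition $(id_X\ot id_C\ot \va_X\ot id_C)\circ \Delta=id_X\ot \Delta_C$ is the mirror of the defining property $(a\ot 1_V)(b\ot v)=ab\ot v$ of Brzezi\'{n}ski's crossed product, and the explicit formula (\ref{formco}) is the arrow-reversal of the multiplication $\mu_{A\ot V}$. I would therefore prove both implications directly, mimicking the algebra-side argument step by step.

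For the ``if'' direction I would take $W,\rho$ satisfying (\ref{cobrz1})--(\ref{cobrz5}), define $\Delta$ by (\ref{formco}), and verify three things. That $\va_X\ot \va_C$ is a counit and that the normalization holds follow from the counit-type axioms (\ref{cobrz1})--(\ref{cobrz2}) by a short diagram chase; in particular the normalization uses only $(id_C\ot \va_X)\circ W=\va_X\ot id_C$. Coassociativity is the substantial point: expanding both $(\Delta\ot id_{X\ot C})\circ \Delta$ and $(id_{X\ot C}\ot \Delta)\circ \Delta$ through (\ref{formco}) and comparing reduces, after one application of coassociativity of $\Delta_C$, exactly to the three cocycle-type identities (\ref{cobrz3})--(\ref{cobrz5}).

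For the ``only if'' direction I would start from an arbitrary coalgebra structure $\Delta$ on $X\ot C$ with counit $\va_X\ot \va_C$ satisfying the normalization, and define the structure maps as the duals of $R(v\ot a)=(1_A\ot v)(a\ot 1_V)$ and $\sigma(v,v')=(1_A\ot v)(1_A\ot v')$, namely
\begin{eqnarray*}
&&W=\bigl((\va_X\ot id_C)\ot(id_X\ot \va_C)\bigr)\circ \Delta, \\
&&\rho=\bigl((id_X\ot \va_C)\ot(id_X\ot \va_C)\bigr)\circ \Delta.
\end{eqnarray*}
Conditions (\ref{cobrz1})--(\ref{cobrz2}) then drop out of the counit axiom, while (\ref{cobrz3})--(\ref{cobrz5}) follow by applying suitable counits to the coassociativity identity for $\Delta$. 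The key remaining step is to show that these $W,\rho$ reproduce the original $\Delta$ via (\ref{formco}); this is the dual of recovering the crossed-product multiplication from $R$ and $\sigma$, and it is where the normalization condition is used decisively.

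I expect the main obstacle to be exactly this reconstruction in the ``only if'' direction, together with the verification of (\ref{cobrz3})--(\ref{cobrz5}). The computation is the arrow-reversal of the identity $(a\ot v)(a'\ot v')=aa'_R\sigma_1(v_R,v')\ot \sigma_2(v_R,v')$: one writes $\Delta$ in Sweedler notation, inserts $\va_X$ and $\va_C$ in the slots dictated by the definitions of $W$ and $\rho$, and collapses using the normalization $(id_X\ot id_C\ot \va_X\ot id_C)\circ \Delta=id_X\ot \Delta_C$. The difficulty is purely in managing the nested Sweedler indices for $\Delta_C$ and for the two structure maps simultaneously; once the bookkeeping is organized symmetrically to the algebra case, each dual axiom falls into place.
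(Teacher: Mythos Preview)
The paper does not actually prove this proposition: it is only \emph{recalled} from \cite{BDII} (Bespalov--Drabant), so there is no ``paper's own proof'' to compare against. Your approach---recognizing the statement as the formal dual of Proposition~\ref{defbrz} and running the same argument with all arrows reversed---is the standard and correct one, and your definitions of $W$ and $\rho$ in the ``only if'' direction coincide exactly with the formulae the paper records in Remark~\ref{tare}.

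One small remark on your sketch: in the ``only if'' direction, the reconstruction of $\Delta$ from $W$ and $\rho$ via (\ref{formco}) requires more than just the single normalization hypothesis $(id_X\ot id_C\ot \va_X\ot id_C)\circ \Delta=id_X\ot \Delta_C$; you will also need to invoke coassociativity of $\Delta$ together with the counit axiom to extract the companion collapse conditions (e.g.\ what happens when $\va_C$ is applied in the first $C$-slot). This is not an obstruction---it is exactly dual to how, on the algebra side, one uses associativity of the crossed-product multiplication together with the unit to recover the full multiplication from $R$ and $\sigma$---but it is worth being explicit that coassociativity is doing real work there, not only in deriving (\ref{cobrz3})--(\ref{cobrz5}).
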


If $X_{\;W, \rho }\ot C$ is a crossed coproduct, we 
introduce the following Sweedler-type notation: 
\begin{eqnarray*}
&&W:X\ot C\rightarrow C\ot X, \;\;\;W(x\ot c)=c_W\ot x_W, \\
&&\rho :X\ot C\rightarrow X\ot X, \;\;\;\rho (x\ot c)=
\rho _1(x, c)\ot \rho _2(x, c), 
\end{eqnarray*}
for all $x\in X$, $c\in C$. With this notation, the 
comultiplication of $X_{\;W, \rho }\ot C$ reads: 
\begin{eqnarray*}
&&\Delta (x\ot c)=(\rho _1(x, c_1)\ot c_{2_W})\ot 
(\rho _2(x, c_1)_W\ot c_3), \;\;\;\forall \;x\in X, c\in C. 
\end{eqnarray*}
\begin{remark} \label{tare}
It is easy to see that, if $X_{\;W, \rho }\ot C$ is a crossed coproduct 
as above, then 
\begin{eqnarray*}
&&W=(\varepsilon _X\ot id_C\ot id_X\ot \varepsilon _C)
\circ \Delta , \\
&&\rho =(id_X\ot \varepsilon _C\ot id_X\ot \varepsilon _C)
\circ \Delta ,
\end{eqnarray*}
where $\Delta $ is the comultiplication of 
$X_{\;W, \rho }\ot C$.
\end{remark}

We recall the following concept from \cite{BDII}:
\begin{definition} (\cite{BDII})
A bialgebra $B$ is called a {\em cross product bialgebra} 
if its underlying algebra structure is a crossed product algebra 
$A\ot _{R, \sigma }C$ and its underlying coalgebra structure 
is a crossed coproduct coalgebra $A_{\;W, \rho }\ot C$ 
on the same objects. This cross product bialgebra $B$ 
will be denoted by 
$A _{\;W}^{\;\rho}\bowtie _{\;R}^{\;\sigma }C$. 
\end{definition}
\begin{remark}
If $A _{\;W}^{\;\rho}\bowtie _{\;R}^{\;\sigma }C$ is a cross 
product bialgebra, then the vector space $A\ot C$ 
becomes canonically a left $A$-module and a right $C$-comodule 
as follows: 
\begin{eqnarray*}
&&A\ot (A\ot C)\rightarrow 
A\ot C, \;\;\; a'\ot (a\ot c)\mapsto a'a\ot c, \\
&&A\ot C\rightarrow (A\ot C)\ot C, \;\;\;a\ot c\mapsto 
(a\ot c_1)\ot c_2,
\end{eqnarray*}
for all $a, a'\in A$ and $c\in C$. 
\end{remark}
\begin{definition}
Two cross product bialgebras 
$A _{\;W}^{\;\rho}\bowtie _{\;R}^{\;\sigma }C$ and 
$A _{\;W'}^{\;\rho '}\bowtie _{\;R'}^{\;\sigma '}C$ 
will be called {\em equivalent} if there exists a 
linear isomorphism 
$\varphi :A _{\;W'}^{\;\rho '}\bowtie _{\;R'}^{\;\sigma '}C
\simeq A _{\;W}^{\;\rho}\bowtie _{\;R}^{\;\sigma }C$ which is a 
morphism of bialgebras, of left $A$-modules and of right 
$C$-comodules. 
\end{definition}

Let now $(A, \mu _A, 1_A, \Delta _A, \varepsilon _A)$ be a 
bialgebra and $(C, \Delta _C, \varepsilon _C)$ a coalgebra. 
Assume that we have a crossed product algebra 
$A\ot _{R, \sigma }C$. Define the maps 
\begin{eqnarray*}
&&W_0:A\ot C\rightarrow C\ot A, \;\;\;W_0(a\ot c)=c\ot a, \\
&&\rho _0:A\ot C\rightarrow A\ot A, \;\;\;\rho _0(a\ot c)=
a_1\ot a_2\varepsilon _C(c).
\end{eqnarray*}
Then it is easy to see that  $A_{\;W_0, \rho _0}\ot C$ is 
a crossed coproduct coalgebra, which is actually 
the tensor product coalgebra $A\ot C$. 

Assume that moreover 
$A _{\;W_0}^{\;\rho _0}\bowtie _{\;R}^{\;\sigma }C$ is a 
cross product bialgebra, that is the maps 
\begin{eqnarray*}
&&\Delta :A\ot _{R, \sigma }C\rightarrow 
(A\ot _{R, \sigma }C)\ot (A\ot _{R, \sigma }C), \;\;\;
\Delta (a\ot c)=(a_1\ot c_1)\ot (a_2\ot c_2), \\
&&\varepsilon _A\ot \varepsilon _C:A\ot _{R, \sigma }C
\rightarrow k,  
\end{eqnarray*}
are algebra maps. Our aim is to describe all cross product 
bialgebras that are equivalent to 
$A _{\;W_0}^{\;\rho _0}\bowtie _{\;R}^{\;\sigma }C$.
\begin{theorem}\label{maincros}
In the above hypotheses, a cross product bialgebra 
$A _{\;W'}^{\;\rho '}\bowtie _{\;R'}^{\;\sigma '}C$ is 
equivalent to $A _{\;W_0}^{\;\rho _0}\bowtie _{\;R}^{\;\sigma }C$ 
if and only if there exist linear maps 
$\theta , \gamma :C\rightarrow A\ot C$, 
with notation $\theta (c)=c_{<-1>}\ot c_{<0>}$ and 
$\gamma (c)=c_{\{-1\}}\ot c_{\{0\}}$, for all $c\in C$, 
such that the conditions (\ref{Rprim})--(\ref{cros4}) are 
satisfied and moreover we have, 
for all $a\in A$, $c\in C$:
\begin{eqnarray}
&&W'(a\ot c)=\varepsilon _A(c_{<0>_{1_{\{-1\}}}})
\varepsilon _C(c_{<0>_{2_{\{0\}}}})c_{<0>_{1_{\{0\}}}}
\ot ac_{<-1>}c_{<0>_{2_{\{-1\}}}}, \label{Wprim}\\
&&\rho '(a\ot c)=\varepsilon _C(c_{<0>_{\{0\}_{\{0\}}}})
a_1c_{<-1>_1}c_{<0>_{\{-1\}}}\ot a_2c_{<-1>_2}
c_{<0>_{\{0\}_{\{-1\}}}}, \label{rhoprim}\\
&&\varepsilon _A(c_{<-1>})\varepsilon _C(c_{<0>})
=\varepsilon _C(c)=
\varepsilon _A(c_{\{-1\}})\varepsilon _C(c_{\{0\}}), 
\label{extra1}\\
&&c_{<-1>}\ot c_{<0>_1}\ot c_{<0>_2}=
c_{1_{<-1>}}\ot c_{1_{<0>}}\ot c_2, \label{extra2} \\
&&c_{\{-1\}}\ot c_{\{0\}_1}\ot c_{\{0\}_2}=
c_{1_{\{-1\}}}\ot c_{1_{\{0\}}}\ot c_2. \label{extra3}
\end{eqnarray} 
\end{theorem}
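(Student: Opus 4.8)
The strategy is to reduce \thref{maincros} to \thref{mainconverse} by observing that the notion of equivalence for cross product bialgebras splits into an algebra part and a coalgebra part, and that on the algebra side everything is already governed by \thref{mainconverse}. So I would first apply \thref{mainconverse} to the underlying crossed product algebras $A\ot _{R, \sigma }C$ and $A\ot _{R', \sigma '}C$: a bialgebra equivalence $\varphi $ is in particular an algebra isomorphism with $\varphi (a\ot 1_C)=a\ot 1_C$, so it produces maps $\theta , \gamma :C\rightarrow A\ot C$ satisfying \eqref{Rprim}--\eqref{cros4}, and conversely any such maps give back an algebra isomorphism $\varphi (a\ot c)=ac_{<-1>}\ot c_{<0>}$ (with inverse $\varphi ^{-1}(a\ot c)=ac_{\{-1\}}\ot c_{\{0\}}$). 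The whole point is then to work out \emph{which} extra conditions on $\theta , \gamma $ are equivalent to demanding that this same $\varphi $ also be a coalgebra morphism and a right $C$-comodule morphism, and to check that these extra conditions force the coalgebra data $W', \rho '$ of the target to be exactly \eqref{Wprim}, \eqref{rhoprim}.

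\textbf{Key steps.} First I would record that the right $C$-comodule structures on both sides are $a\ot c\mapsto (a\ot c_1)\ot c_2$, where the $c_i$ refer to the coalgebra structure of each cross product bialgebra; since the source has the tensor product coalgebra $A\ot C$, its comodule structure uses $\Delta _C$ directly, whereas on the target one must use \reref{tare} to recover $W'$ and $\rho '$ from the comultiplication. Demanding that $\varphi $ be a right $C$-comodule map, together with the fact that $\varphi $ is determined by $\theta $, will yield relation \eqref{extra2} (this is the statement that $\theta $ is ``colinear'' in the appropriate sense); applying the same reasoning to $\varphi ^{-1}$ and $\gamma $ gives \eqref{extra3}. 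Next, compatibility of $\varphi $ with the counits $\varepsilon _A\ot \varepsilon _C$ forces \eqref{extra1} (here one uses $\varphi (a\ot c)=ac_{<-1>}\ot c_{<0>}$ and multiplicativity of $\varepsilon _A$). Finally, and this is the core computation, I would impose that $\varphi $ is a coalgebra morphism, i.e. $(\varphi \ot \varphi )\circ \Delta '=\Delta \circ \varphi $ where $\Delta '$ is built from $W', \rho '$ via \eqref{formco} and $\Delta $ is the tensor product comultiplication on $A\ot C$. Rearranging this identity --- applying $\varphi ^{-1}\ot \varphi ^{-1}$ and using \reref{tare} to extract $W'$ and $\rho '$ by hitting appropriate legs with counits --- should produce precisely the formulas \eqref{Wprim} and \eqref{rhoprim}, with the $\varepsilon _A, \varepsilon _C$ factors appearing exactly because of the projections in \reref{tare} and the shape of $\rho _0, W_0$. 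Conversely, given $\theta , \gamma$ satisfying all of \eqref{Rprim}--\eqref{cros4} and \eqref{extra1}--\eqref{extra3}, and defining $W', \rho '$ by \eqref{Wprim}, \eqref{rhoprim}, one must check that $A_{\;W', \rho '}\ot C$ really is a crossed coproduct (conditions \eqref{cobrz1}--\eqref{cobrz5}), that $A _{\;W'}^{\;\rho '}\bowtie _{\;R'}^{\;\sigma '}C$ is a genuine cross product bialgebra, and that $\varphi (a\ot c)=ac_{<-1>}\ot c_{<0>}$ is the desired equivalence; the bialgebra axioms for the target are then automatic because they can be transported along the isomorphism $\varphi$ from the bialgebra axioms for the source, which hold by hypothesis.

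\textbf{Main obstacle.} The routine part is the algebra side, which is just \thref{mainconverse}. The delicate part is the bookkeeping in the coalgebra computation: unlike in \thref{mainconverse}, where one could test equality of maps into $A\ot V$ directly, here one is testing equality of comultiplications into a four-fold tensor product, and extracting $W'$ and $\rho '$ requires the somewhat fiddly projection formulas of \reref{tare}, which is why the nested Sweedler indices in \eqref{Wprim}--\eqref{rhoprim} look so heavy. Concretely, the hard step is to verify that the two sides of $(\varphi\ot\varphi)\circ\Delta' = \Delta\circ\varphi$, once $W',\rho'$ are substituted, collapse to an identity using only \eqref{cros2}, \eqref{cros3}, \eqref{extra2}, \eqref{extra3} and the coassociativity/counit axioms --- the challenge is purely organizational, keeping track of which copy of $\Delta_C$ and which of $R$ each subscript refers to, since no genuinely new structural input beyond \eqref{extra1}--\eqref{extra3} is needed.
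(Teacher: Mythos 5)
Your proposal is correct and follows essentially the same route as the paper: the algebra side is delegated to \thref{mainconverse}, the conditions \eqref{extra1}--\eqref{extra3} are extracted from counit and right $C$-comodule compatibility of $\varphi$ and $\varphi^{-1}$, the formulas \eqref{Wprim}--\eqref{rhoprim} are recovered from the transported comultiplication via \reref{tare}, and in the converse direction the crossed coproduct axioms for $W',\rho'$ are verified directly while the bialgebra axioms are transported along $\varphi$ exactly as in the paper's indirect argument. The only difference is one of completeness, not of method: the lengthy verifications of (\ref{cobrz3})--(\ref{cobrz5}) and of the explicit formula for $\Delta'$ are announced rather than carried out.
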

\begin{proof}
We prove first that if all those conditions are satisfied then 
$A _{\;W'}^{\;\rho '}\bowtie _{\;R'}^{\;\sigma '}C$ is 
indeed a cross product bialgebra and moreover it is 
equivalent to $A _{\;W_0}^{\;\rho _0}\bowtie _{\;R}^{\;\sigma }C$. 
We begin by obtaining some important consequences of the relations. 
We compute: 
\begin{eqnarray*}
\varepsilon _C(c_{1_{<0>}})c_{1_{<-1>}}c_{2_{\{-1\}}}\ot 
c_{2_{\{0\}}}
&\overset{(\ref{extra2})}{=}&
\varepsilon _C(c_{<0>_1})c_{<-1>}c_{<0>_{2_{\{-1\}}}}\ot 
c_{<0>_{2_{\{0\}}}}\\
&=&c_{<-1>}c_{<0>_{\{-1\}}}\ot c_{<0>_{\{0\}}}\\
&\overset{(\ref{cros2})}{=}&1_A\ot c.
\end{eqnarray*}
We apply $\varepsilon _A\ot id_C$ to this relation and we obtain: 
\begin{eqnarray*}
c&=&\varepsilon _C(c_{1_{<0>}})\varepsilon _A(c_{1_{<-1>}})
\varepsilon _A(c_{2_{\{-1\}}})c_{2_{\{0\}}}\\
&\overset{(\ref{extra1})}{=}&
\varepsilon _C(c_1)
\varepsilon _A(c_{2_{\{-1\}}})c_{2_{\{0\}}}\\
&=&\varepsilon _A(c_{\{-1\}})c_{\{0\}}.
\end{eqnarray*}
Also, for $c\in C$, we compute:
\begin{eqnarray*}
\varepsilon _C(c_{1_{\{0\}}})\varepsilon _C(c_{2_{\{0\}}})
c_{1_{\{-1\}}}\ot c_{2_{\{-1\}}}
&\overset{(\ref{extra3})}{=}&
\varepsilon _C(c_{\{0\}_1})\varepsilon _C(c_{\{0\}_{2_{\{0\}}}})
c_{\{-1\}}\ot c_{\{0\}_{2_{\{-1\}}}}\\
&=&\varepsilon _C(c_{\{0\}_{\{0\}}})
c_{\{-1\}}\ot c_{\{0\}_{\{-1\}}}.
\end{eqnarray*}
Let us record for future use these three relations and the other three 
obtained by interchanging $<>$ and $\{\}$: 
\begin{eqnarray}
&&\varepsilon _C(c_{1_{<0>}})c_{1_{<-1>}}c_{2_{\{-1\}}}\ot 
c_{2_{\{0\}}}=1_A\ot c, \label{cucu1}\\
&&\varepsilon _C(c_{1_{\{0\}}})c_{1_{\{-1\}}}c_{2_{<-1>}}\ot 
c_{2_{<0>}}=1_A\ot c, \label{cucu2}\\
&&\varepsilon _A(c_{<-1>})c_{<0>}=c, \label{cucu3}\\
&&\varepsilon _A(c_{\{-1\}})c_{\{0\}}=c, \label{cucu4} \\
&&\varepsilon _C(c_{1_{\{0\}}})\varepsilon _C(c_{2_{\{0\}}})
c_{1_{\{-1\}}}\ot c_{2_{\{-1\}}}=
\varepsilon _C(c_{\{0\}_{\{0\}}})
c_{\{-1\}}\ot c_{\{0\}_{\{-1\}}}, \label{cucu5}\\
&&\varepsilon _C(c_{1_{<0>}})\varepsilon _C(c_{2_{<0>}})
c_{1_{<-1>}}\ot c_{2_{<-1>}}=
\varepsilon _C(c_{<0>_{<0>}})
c_{<-1>}\ot c_{<0>_{<-1>}}, \label{cucu6}
\end{eqnarray}
for all $c\in C$. Note that by using (\ref{cucu4}), 
the formula for $W'$ may be written as 
\begin{eqnarray}
&&W'(a\ot c)=\varepsilon _C(c_{<0>_{2_{\{0\}}}})
c_{<0>_1}\ot ac_{<-1>}c_{<0>_{2_{\{-1\}}}}. \label{Wprimnou}
\end{eqnarray}

We prove now that for $W'$ and $\rho '$ defined by 
(\ref{Wprim}) and respectively (\ref{rhoprim}), 
$A_{\;W', \rho '}\ot C$ is a crossed coproduct coalgebra. The 
relations (\ref{cobrz1}) and (\ref{cobrz2}) for 
$W'$ and $\rho '$ are easy to prove and are left to the reader. \\[2mm]
\underline{Proof of (\ref{cobrz3})}: We compute, for $a\in A$ and 
$c\in C$:
\begin{eqnarray*}
(\Delta _C\ot id_A)(W'(a\ot c))&=&
\varepsilon _C(c_{<0>_{3_{\{0\}}}})c_{<0>_1}\ot 
c_{<0>_2}\ot ac_{<-1>}c_{<0>_{3_{\{-1\}}}}\\
&\overset{(\ref{extra2})}{=}&
\varepsilon _C(c_{3_{\{0\}}})c_{1_{<0>}}\ot 
c_2\ot ac_{1_{<-1>}}c_{3_{\{-1\}}},
\end{eqnarray*}
${\;\;\;\;}$$(id_C\ot W')\circ (W'\ot id_C)\circ (id_A\ot \Delta _C)(a\ot c)$
\begin{eqnarray*}
&=&(id_C\ot W')\circ (W'\ot id_C)(a\ot c_1\ot c_2)\\
&=&(id_C\ot W')(\varepsilon _C(c_{1_{<0>_{2_{\{0\}}}}})c_{1_{<0>_1}}\ot 
ac_{1_{<-1>}}c_{1_{<0>_{2_{\{-1\}}}}}\ot c_2)\\
&=&\varepsilon _C(c_{1_{<0>_{2_{\{0\}}}}})c_{1_{<0>_1}}\ot 
\varepsilon _C(c_{2_{<0>_{2_{\{0\}}}}})c_{2_{<0>_1}}\ot 
ac_{1_{<-1>}}c_{1_{<0>_{2_{\{-1\}}}}}c_{2_{<-1>}}
c_{2_{<0>_{2_{\{-1\}}}}}\\
&\overset{(\ref{extra2})}{=}&
\varepsilon _C(c_{1_{<0>_{2_{\{0\}}}}})c_{1_{<0>_1}}\ot 
\varepsilon _C(c_{2_{2_{\{0\}}}})c_{2_{1_{<0>}}}\ot 
ac_{1_{<-1>}}c_{1_{<0>_{2_{\{-1\}}}}}c_{2_{1_{<-1>}}}
c_{2_{2_{\{-1\}}}}\\
&=&\varepsilon _C(c_{1_{<0>_{2_{\{0\}}}}})c_{1_{<0>_1}}\ot 
\varepsilon _C(c_{3_{\{0\}}})c_{2_{<0>}}\ot 
ac_{1_{<-1>}}c_{1_{<0>_{2_{\{-1\}}}}}c_{2_{<-1>}}
c_{3_{\{-1\}}}\\
&\overset{(\ref{extra2})}{=}&
\varepsilon _C(c_{2_{\{0\}}})c_{1_{<0>}}\ot 
\varepsilon _C(c_{4_{\{0\}}})c_{3_{<0>}}\ot 
ac_{1_{<-1>}}c_{2_{\{-1\}}}c_{3_{<-1>}}
c_{4_{\{-1\}}}\\
&\overset{(\ref{cucu2})}{=}&
\varepsilon _C(c_{3_{\{0\}}})c_{1_{<0>}}\ot 
c_2\ot ac_{1_{<-1>}}c_{3_{\{-1\}}}, \;\;\;q.e.d.
\end{eqnarray*}
\underline{Proof of (\ref{cobrz4})}: We compute, for $a\in A$ and 
$c\in C$:\\[2mm]
${\;\;}$$(\rho '\ot id_A)\circ (id_A\ot W')\circ (\rho '\ot id_C)\circ 
(id_A\ot \Delta _C)(a\ot c)$
\begin{eqnarray*}
&=&(\rho '\ot id_A)\circ (id_A\ot W')\circ (\rho '\ot id_C)
(a\ot c_1\ot c_2)\\
&=&(\rho '\ot id_A)\circ (id_A\ot W')
(\varepsilon _C(c_{1_{<0>_{\{0\}_{\{0\}}}}})a_1c_{1_{<-1>_1}}
c_{1_{<0>_{\{-1\}}}}\ot a_2c_{1_{<-1>_2}}
c_{1_{<0>_{\{0\}_{\{-1\}}}}}\ot c_2)\\
&=&(\rho '\ot id_A)(\varepsilon _C(c_{1_{<0>_{\{0\}_{\{0\}}}}})
a_1c_{1_{<-1>_1}}c_{1_{<0>_{\{-1\}}}}\ot 
\varepsilon _C(c_{2_{<0>_{2_{\{0\}}}}})c_{2_{<0>_1}}\\
&&\ot a_2c_{1_{<-1>_2}}c_{1_{<0>_{\{0\}_{\{-1\}}}}}
c_{2_{<-1>}}c_{2_{<0>_{2_{\{-1\}}}}})\\
&\overset{(\ref{extra2})}{=}&
(\rho '\ot id_A)(\varepsilon _C(c_{<0>_{1_{\{0\}_{\{0\}}}}})
\varepsilon _C(c_{<0>_{2_{<0>_{2_{\{0\}}}}}})
a_1c_{<-1>_1}c_{<0>_{1_{\{-1\}}}}\ot 
c_{<0>_{2_{<0>_1}}}\\
&&\ot a_2c_{<-1>_2}c_{<0>_{1_{\{0\}_{\{-1\}}}}}
c_{<0>_{2_{<-1>}}}c_{<0>_{2_{<0>_{2_{\{-1\}}}}}})\\
&\overset{(\ref{extra3})}{=}&
(\rho '\ot id_A)(\varepsilon _C(c_{<0>_{\{0\}_{1_{\{0\}}}}})
\varepsilon _C(c_{<0>_{\{0\}_{2_{<0>_{2_{\{0\}}}}}}})
a_1c_{<-1>_1}c_{<0>_{\{-1\}}}\ot 
c_{<0>_{\{0\}_{2_{<0>_1}}}}\\
&&\ot a_2c_{<-1>_2}c_{<0>_{\{0\}_{1_{\{-1\}}}}}
c_{<0>_{\{0\}_{2_{<-1>}}}}c_{<0>_{\{0\}_{2_{<0>_{2_{\{-1\}}}}}}})\\
&\overset{(\ref{cucu2})}{=}&
(\rho '\ot id_A)(\varepsilon _C(c_{<0>_{\{0\}_{2_{\{0\}}}}})
a_1c_{<-1>_1}c_{<0>_{\{-1\}}}\ot c_{<0>_{\{0\}_1}}\ot 
a_2c_{<-1>_2}c_{<0>_{\{0\}_{2_{\{-1\}}}}})\\
&\overset{(\ref{extra3})}{=}&
(\rho '\ot id_A)(\varepsilon _C(c_{<0>_{2_{\{0\}}}})
a_1c_{<-1>_1}c_{<0>_{1_{\{-1\}}}}\ot c_{<0>_{1_{\{0\}}}}\ot 
a_2c_{<-1>_2}c_{<0>_{2_{\{-1\}}}})\\
&\overset{(\ref{extra2})}{=}&
(\rho '\ot id_A)(\varepsilon _C(c_{2_{\{0\}}})
a_1c_{1_{<-1>_1}}c_{1_{<0>_{\{-1\}}}}\ot c_{1_{<0>_{\{0\}}}}\ot 
a_2c_{1_{<-1>_2}}c_{2_{\{-1\}}})\\
&=&\varepsilon _C(c_{2_{\{0\}}})\varepsilon _C
(c_{1_{<0>_{\{0\}_{<0>_{\{0\}_{\{0\}}}}}}})
a_1c_{1_{<-1>_1}}c_{1_{<0>_{\{-1\}_1}}}
c_{1_{<0>_{\{0\}_{<-1>_1}}}}
c_{1_{<0>_{\{0\}_{<0>_{\{-1\}}}}}}\\
&&\ot 
a_2c_{1_{<-1>_2}}c_{1_{<0>_{\{-1\}_2}}}
c_{1_{<0>_{\{0\}_{<-1>_2}}}}
c_{1_{<0>_{\{0\}_{<0>_{\{0\}_{\{-1\}}}}}}}\ot 
a_3c_{1_{<-1>_3}}c_{2_{\{-1\}}}\\
&=&\varepsilon _C(c_{2_{\{0\}}})\varepsilon _C
(c_{1_{<0>_{\{0\}_{<0>_{\{0\}_{\{0\}}}}}}})
a_1c_{1_{<-1>_1}}(c_{1_{<0>_{\{-1\}}}}
c_{1_{<0>_{\{0\}_{<-1>}}}})_1
c_{1_{<0>_{\{0\}_{<0>_{\{-1\}}}}}}\\
&&\ot 
a_2c_{1_{<-1>_2}}(c_{1_{<0>_{\{-1\}}}}
c_{1_{<0>_{\{0\}_{<-1>}}}})_2
c_{1_{<0>_{\{0\}_{<0>_{\{0\}_{\{-1\}}}}}}}\ot 
a_3c_{1_{<-1>_3}}c_{2_{\{-1\}}}\\
&\overset{(\ref{cros3})}{=}&
\varepsilon _C(c_{2_{\{0\}}})\varepsilon _C
(c_{1_{<0>_{\{0\}_{\{0\}}}}})
a_1c_{1_{<-1>_1}}
c_{1_{<0>_{\{-1\}}}}\ot 
a_2c_{1_{<-1>_2}}
c_{1_{<0>_{\{0\}_{\{-1\}}}}}\ot 
a_3c_{1_{<-1>_3}}c_{2_{\{-1\}}}\\
&\overset{(\ref{extra2})}{=}&
\varepsilon _C(c_{<0>_{2_{\{0\}}}})\varepsilon _C
(c_{<0>_{1_{\{0\}_{\{0\}}}}})
a_1c_{<-1>_1}
c_{<0>_{1_{\{-1\}}}}\ot 
a_2c_{<-1>_2}
c_{<0>_{1_{\{0\}_{\{-1\}}}}}\\
&&\ot 
a_3c_{<-1>_3}c_{<0>_{2_{\{-1\}}}}\\
&\overset{(\ref{extra3})}{=}&
\varepsilon _C(c_{<0>_{\{0\}_{2_{\{0\}}}}})\varepsilon _C
(c_{<0>_{\{0\}_{1_{\{0\}}}}})
a_1c_{<-1>_1}
c_{<0>_{\{-1\}}}\ot 
a_2c_{<-1>_2}
c_{<0>_{\{0\}_{1_{\{-1\}}}}}\\
&&\ot 
a_3c_{<-1>_3}c_{<0>_{\{0\}_{2_{\{-1\}}}}}\\
&\overset{(\ref{extra3})}{=}&
\varepsilon _C(c_{<0>_{\{0\}_{\{0\}_{2_{\{0\}}}}}})
\varepsilon _C(c_{<0>_{\{0\}_{\{0\}_1}}})
a_1c_{<-1>_1}
c_{<0>_{\{-1\}}}\ot 
a_2c_{<-1>_2}
c_{<0>_{\{0\}_{\{-1\}}}}\\
&&\ot 
a_3c_{<-1>_3}c_{<0>_{\{0\}_{\{0\}_{2_{\{-1\}}}}}}\\
&=&\varepsilon _C(c_{<0>_{\{0\}_{\{0\}_{\{0\}}}}})
a_1c_{<-1>_1}
c_{<0>_{\{-1\}}}\ot 
a_2c_{<-1>_2}
c_{<0>_{\{0\}_{\{-1\}}}}\ot 
a_3c_{<-1>_3}c_{<0>_{\{0\}_{\{0\}_{\{-1\}}}}}, 
\end{eqnarray*}
${\;\;\;}$$(id_A\ot \rho ')\circ (\rho '\ot id_C)\circ 
(id_A\ot \Delta _C)(a\ot c)$
\begin{eqnarray*}
&=&(id_A\ot \rho ')\circ (\rho '\ot id_C)(a\ot c_1\ot c_2)\\
&=&(id_A\ot \rho ')(\varepsilon _C(c_{1_{<0>_{\{0\}_{\{0\}}}}})
a_1c_{1_{<-1>_1}}c_{1_{<0>_{\{-1\}}}}\ot a_2c_{1_{<-1>_2}}
c_{1_{<0>_{\{0\}_{\{-1\}}}}}\ot c_2)\\
&=&\varepsilon _C(c_{1_{<0>_{\{0\}_{\{0\}}}}})
\varepsilon _C(c_{2_{<0>_{\{0\}_{\{0\}}}}})
a_1c_{1_{<-1>_1}}c_{1_{<0>_{\{-1\}}}}\\
&&\ot a_2
c_{1_{<-1>_2}}
c_{1_{<0>_{\{0\}_{\{-1\}_1}}}}
c_{2_{<-1>_1}}c_{2_{<0>_{\{-1\}}}}\\
&&\ot 
a_3
c_{1_{<-1>_3}}
c_{1_{<0>_{\{0\}_{\{-1\}_2}}}}
c_{2_{<-1>_2}}c_{2_{<0>_{\{0\}_{\{-1\}}}}}\\
&\overset{(\ref{extra2})}{=}&
\varepsilon _C(c_{<0>_{1_{\{0\}_{\{0\}}}}})
\varepsilon _C(c_{<0>_{2_{<0>_{\{0\}_{\{0\}}}}}})
a_1c_{<-1>_1}c_{<0>_{1_{\{-1\}}}}\\
&&\ot a_2
c_{<-1>_2}
(c_{<0>_{1_{\{0\}_{\{-1\}}}}}
c_{<0>_{2_{<-1>}}})_1c_{<0>_{2_{<0>_{\{-1\}}}}}\\
&&\ot 
a_3c_{<-1>_3}
(c_{<0>_{1_{\{0\}_{\{-1\}}}}}
c_{<0>_{2_{<-1>}}})_2c_{<0>_{2_{<0>_{\{0\}_{\{-1\}}}}}}\\
&\overset{(\ref{extra3})}{=}&
\varepsilon _C(c_{<0>_{\{0\}_{1_{\{0\}}}}})
\varepsilon _C(c_{<0>_{\{0\}_{2_{<0>_{\{0\}_{\{0\}}}}}}})
a_1c_{<-1>_1}c_{<0>_{\{-1\}}}\\
&&\ot a_2
c_{<-1>_2}
(c_{<0>_{\{0\}_{1_{\{-1\}}}}}
c_{<0>_{\{0\}_{2_{<-1>}}}})_1c_{<0>_{\{0\}_{2_{<0>_{\{-1\}}}}}}\\
&&\ot 
a_3c_{<-1>_3}
(c_{<0>_{\{0\}_{1_{\{-1\}}}}}
c_{<0>_{\{0\}_{2_{<-1>}}}})_2
c_{<0>_{\{0\}_{2_{<0>_{\{0\}_{\{-1\}}}}}}}\\
&\overset{(\ref{cucu2})}{=}&
\varepsilon _C(c_{<0>_{\{0\}_{\{0\}_{\{0\}}}}})
a_1c_{<-1>_1}
c_{<0>_{\{-1\}}}\ot 
a_2c_{<-1>_2}
c_{<0>_{\{0\}_{\{-1\}}}}\\
&&\ot 
a_3c_{<-1>_3}c_{<0>_{\{0\}_{\{0\}_{\{-1\}}}}}, \;\;\;q.e.d.
\end{eqnarray*}
\underline{Proof of (\ref{cobrz5})}: Let $a\in A$ and 
$c\in C$; by using a part of the computation performed 
for proving (\ref{cobrz4}), we compute:\\[2mm]
${\;\;\;}$$(W'\ot id_A)\circ (id_A\ot W')\circ (\rho '\ot id_C)\circ 
(id_A\ot \Delta _C)(a\ot c)$
\begin{eqnarray*}
&=&(W'\ot id_A)(\varepsilon _C(c_{2_{\{0\}}})
a_1c_{1_{<-1>_1}}c_{1_{<0>_{\{-1\}}}}\ot c_{1_{<0>_{\{0\}}}}\ot 
a_2c_{1_{<-1>_2}}c_{2_{\{-1\}}})\\
&=&\varepsilon _C(c_{2_{\{0\}}})
\varepsilon _C(c_{1_{<0>_{\{0\}_{<0>_{2_{\{0\}}}}}}})
c_{1_{<0>_{\{0\}_{<0>_1}}}}\ot a_1c_{1_{<-1>_1}}
c_{1_{<0>_{\{-1\}}}}c_{1_{<0>_{\{0\}_{<-1>}}}}\\
&&c_{1_{<0>_{\{0\}_{<0>_{2_{\{-1\}}}}}}}\ot 
a_2c_{1_{<-1>_2}}c_{2_{\{-1\}}}\\
&\overset{(\ref{extra2})}{=}&
\varepsilon _C(c_{<0>_{2_{\{0\}}}})
\varepsilon _C(c_{<0>_{1_{\{0\}_{<0>_{2_{\{0\}}}}}}})
c_{<0>_{1_{\{0\}_{<0>_1}}}}\ot a_1c_{<-1>_1}
c_{<0>_{1_{\{-1\}}}}c_{<0>_{1_{\{0\}_{<-1>}}}}\\
&&c_{<0>_{1_{\{0\}_{<0>_{2_{\{-1\}}}}}}}\ot 
a_2c_{<-1>_2}c_{<0>_{2_{\{-1\}}}}\\
&\overset{(\ref{extra3})}{=}&
\varepsilon _C(c_{<0>_{\{0\}_{2_{\{0\}}}}})
\varepsilon _C(c_{<0>_{\{0\}_{1_{<0>_{2_{\{0\}}}}}}})
c_{<0>_{\{0\}_{1_{<0>_1}}}}\ot a_1c_{<-1>_1}
c_{<0>_{\{-1\}}}c_{<0>_{\{0\}_{1_{<-1>}}}}\\
&&c_{<0>_{\{0\}_{1_{<0>_{2_{\{-1\}}}}}}}\ot 
a_2c_{<-1>_2}c_{<0>_{\{0\}_{2_{\{-1\}}}}}\\
&\overset{(\ref{extra2})}{=}&
\varepsilon _C(c_{<0>_{\{0\}_{2_{\{0\}}}}})
\varepsilon _C(c_{<0>_{\{0\}_{1_{2_{\{0\}}}}}})
c_{<0>_{\{0\}_{1_{1_{<0>}}}}}\ot a_1c_{<-1>_1}
c_{<0>_{\{-1\}}}c_{<0>_{\{0\}_{1_{1_{<-1>}}}}}\\
&&c_{<0>_{\{0\}_{1_{2_{\{-1\}}}}}}\ot 
a_2c_{<-1>_2}c_{<0>_{\{0\}_{2_{\{-1\}}}}}\\
&=&\varepsilon _C(c_{<0>_{\{0\}_{3_{\{0\}}}}})
\varepsilon _C(c_{<0>_{\{0\}_{2_{\{0\}}}}})
c_{<0>_{\{0\}_{1_{<0>}}}}\ot a_1c_{<-1>_1}
c_{<0>_{\{-1\}}}c_{<0>_{\{0\}_{1_{<-1>}}}}\\
&&c_{<0>_{\{0\}_{2_{\{-1\}}}}}\ot 
a_2c_{<-1>_2}c_{<0>_{\{0\}_{3_{\{-1\}}}}}\\
&\overset{(\ref{extra2})}{=}&
\varepsilon _C(c_{<0>_{\{0\}_{<0>_{3_{\{0\}}}}}})
\varepsilon _C(c_{<0>_{\{0\}_{<0>_{2_{\{0\}}}}}})
c_{<0>_{\{0\}_{<0>_1}}}\ot a_1c_{<-1>_1}
c_{<0>_{\{-1\}}}c_{<0>_{\{0\}_{<-1>}}}\\
&&c_{<0>_{\{0\}_{<0>_{2_{\{-1\}}}}}}\ot 
a_2c_{<-1>_2}c_{<0>_{\{0\}_{<0>_{3_{\{-1\}}}}}}\\
&\overset{(\ref{cros3})}{=}&
\varepsilon _C(c_{<0>_{3_{\{0\}}}})
\varepsilon _C(c_{<0>_{2_{\{0\}}}})
c_{<0>_1}\ot a_1c_{<-1>_1}
c_{<0>_{2_{\{-1\}}}}\ot 
a_2c_{<-1>_2}c_{<0>_{3_{\{-1\}}}}\\
&\overset{(\ref{extra2})}{=}&
\varepsilon _C(c_{3_{\{0\}}})
\varepsilon _C(c_{2_{\{0\}}})
c_{1_{<0>}}\ot a_1c_{1_{<-1>_1}}
c_{2_{\{-1\}}}\ot 
a_2c_{1_{<-1>_2}}c_{3_{\{-1\}}}\\
&\overset{(\ref{cucu5})}{=}&
\varepsilon _C(c_{2_{\{0\}_{\{0\}}}})
c_{1_{<0>}}\ot a_1c_{1_{<-1>_1}}
c_{2_{\{-1\}}}\ot 
a_2c_{1_{<-1>_2}}c_{2_{\{0\}_{\{-1\}}}},
\end{eqnarray*}
${\;\;\;}$$(id_C\ot \rho ')\circ (W'\ot id_C)
\circ (id_A\ot \Delta _C)(a\ot c)$
\begin{eqnarray*}
&=&(id_C\ot \rho ')\circ (W'\ot id_C)(a\ot c_1\ot c_2)\\
&=&(id_C\ot \rho ')(\varepsilon _C(c_{1_{<0>_{2_{\{0\}}}}})
c_{1_{<0>_1}}\ot ac_{1_{<-1>}}c_{1_{<0>_{2_{\{-1\}}}}}
\ot c_2)\\
&=&\varepsilon _C(c_{1_{<0>_{2_{\{0\}}}}})
\varepsilon _C(c_{2_{<0>_{\{0\}_{\{0\}}}}})
c_{1_{<0>_1}}\ot 
a_1c_{1_{<-1>_1}}c_{1_{<0>_{2_{\{-1\}_1}}}}
c_{2_{<-1>_1}}c_{2_{<0>_{\{-1\}}}}\\
&&\ot a_2c_{1_{<-1>_2}}c_{1_{<0>_{2_{\{-1\}_2}}}}
c_{2_{<-1>_2}}c_{2_{<0>_{\{0\}_{\{-1\}}}}}\\
&\overset{(\ref{extra2})}{=}&
\varepsilon _C(c_{<0>_{1_{2_{\{0\}}}}})
\varepsilon _C(c_{<0>_{2_{<0>_{\{0\}_{\{0\}}}}}})
c_{<0>_{1_1}}\\
&&\ot 
a_1c_{<-1>_1}(c_{<0>_{1_{2_{\{-1\}}}}}
c_{<0>_{2_{<-1>}}})_1c_{<0>_{2_{<0>_{\{-1\}}}}}\\
&&\ot a_2c_{<-1>_2}(c_{<0>_{1_{2_{\{-1\}}}}}
c_{<0>_{2_{<-1>}}})_2c_{<0>_{2_{<0>_{\{0\}_{\{-1\}}}}}}\\
&=&\varepsilon _C(c_{<0>_{2_{\{0\}}}})
\varepsilon _C(c_{<0>_{3_{<0>_{\{0\}_{\{0\}}}}}})
c_{<0>_1}\ot 
a_1c_{<-1>_1}(c_{<0>_{2_{\{-1\}}}}
c_{<0>_{3_{<-1>}}})_1c_{<0>_{3_{<0>_{\{-1\}}}}}\\
&&\ot a_2c_{<-1>_2}(c_{<0>_{2_{\{-1\}}}}
c_{<0>_{3_{<-1>}}})_2c_{<0>_{3_{<0>_{\{0\}_{\{-1\}}}}}}\\
&\overset{(\ref{cucu2})}{=}&
\varepsilon _C(c_{<0>_{2_{\{0\}_{\{0\}}}}})
c_{<0>_1}\ot a_1c_{<-1>_1}c_{<0>_{2_{\{-1\}}}}\ot 
a_2c_{<-1>_2}c_{<0>_{2_{\{0\}_{\{-1\}}}}}\\
&\overset{(\ref{extra2})}{=}&
\varepsilon _C(c_{2_{\{0\}_{\{0\}}}})
c_{1_{<0>}}\ot a_1c_{1_{<-1>_1}}
c_{2_{\{-1\}}}\ot 
a_2c_{1_{<-1>_2}}c_{2_{\{0\}_{\{-1\}}}}, \;\;\;q.e.d.
\end{eqnarray*}
So, $A_{\;W', \rho '}\ot C$ is indeed a crossed coproduct coalgebra. 
If we denote by $\Delta '$ its comultiplication, then by 
(\ref{formco}) we know that $\Delta '$ is given by the 
formula $\Delta '=(id_A\ot W'\ot id_C)\circ 
(\rho '\ot \Delta _C)\circ (id_A\ot \Delta _C)$. 
We claim that this formula reduces to 
\begin{eqnarray}
&&\Delta '(a\ot c)=a_1c_{1_{<-1>_1}}c_{1_{<0>_{\{-1\}}}}
\ot c_{1_{<0>_{\{0\}}}}\ot a_2c_{1_{<-1>_2}}c_{2_{\{-1\}}} 
\ot c_{2_{\{0\}}}, \label{deltaprim}
\end{eqnarray}
for all $a\in A$, $c\in C$. Indeed, we compute:
\begin{eqnarray*}
\Delta '(a\ot c)&=&(id_A\ot W'\ot id_C)\circ 
(\rho '\ot \Delta _C)(a\ot c_1\ot c_2)\\
&=&(id_A\ot W'\ot id_C)(\varepsilon _C(c_{1_{<0>_{\{0\}_{\{0\}}}}})
a_1c_{1_{<-1>_1}}c_{1_{<0>_{\{-1\}}}}\ot 
a_2c_{1_{<-1>_2}}c_{1_{<0>_{\{0\}_{\{-1\}}}}}\\
&&\ot c_2\ot c_3)\\
&=&\varepsilon _C(c_{1_{<0>_{\{0\}_{\{0\}}}}})
a_1c_{1_{<-1>_1}}c_{1_{<0>_{\{-1\}}}}\ot 
\varepsilon _C(c_{2_{<0>_{2_{\{0\}}}}})
c_{2_{<0>_1}}\\
&&\ot a_2c_{1_{<-1>_2}}c_{1_{<0>_{\{0\}_{\{-1\}}}}}
c_{2_{<-1>}}c_{2_{<0>_{2_{\{-1\}}}}}\ot c_3\\
&\overset{(\ref{extra2})}{=}&
\varepsilon _C(c_{1_{<0>_{\{0\}_{\{0\}}}}})
a_1c_{1_{<-1>_1}}c_{1_{<0>_{\{-1\}}}}\ot 
\varepsilon _C(c_{2_{2_{\{0\}}}})
c_{2_{1_{<0>}}}\\
&&\ot a_2c_{1_{<-1>_2}}c_{1_{<0>_{\{0\}_{\{-1\}}}}}
c_{2_{1_{<-1>}}}c_{2_{2_{\{-1\}}}}\ot c_3\\
&=&\varepsilon _C(c_{1_{<0>_{\{0\}_{\{0\}}}}})
a_1c_{1_{<-1>_1}}c_{1_{<0>_{\{-1\}}}}\ot 
\varepsilon _C(c_{3_{\{0\}}})
c_{2_{<0>}}\\
&&\ot a_2c_{1_{<-1>_2}}c_{1_{<0>_{\{0\}_{\{-1\}}}}}
c_{2_{<-1>}}c_{3_{\{-1\}}}\ot c_4\\
&\overset{(\ref{extra3})}{=}&
\varepsilon _C(c_{1_{<0>_{\{0\}_{\{0\}}}}})
a_1c_{1_{<-1>_1}}c_{1_{<0>_{\{-1\}}}}\ot 
\varepsilon _C(c_{3_{\{0\}_1}})
c_{2_{<0>}}\\
&&\ot a_2c_{1_{<-1>_2}}c_{1_{<0>_{\{0\}_{\{-1\}}}}}
c_{2_{<-1>}}c_{3_{\{-1\}}}\ot c_{3_{\{0\}_2}}\\
&=&\varepsilon _C(c_{1_{<0>_{\{0\}_{\{0\}}}}})
a_1c_{1_{<-1>_1}}c_{1_{<0>_{\{-1\}}}}\ot 
c_{2_{<0>}}\\
&&\ot a_2c_{1_{<-1>_2}}c_{1_{<0>_{\{0\}_{\{-1\}}}}}
c_{2_{<-1>}}c_{3_{\{-1\}}}\ot c_{3_{\{0\}}}\\
&\overset{(\ref{cucu5})}{=}&
\varepsilon _C(c_{1_{<0>_{1_{\{0\}}}}})
\varepsilon _C(c_{1_{<0>_{2_{\{0\}}}}})
a_1c_{1_{<-1>_1}}c_{1_{<0>_{1_{\{-1\}}}}}\ot 
c_{2_{<0>}}\\
&&\ot a_2c_{1_{<-1>_2}}c_{1_{<0>_{2_{\{-1\}}}}}
c_{2_{<-1>}}c_{3_{\{-1\}}}\ot c_{3_{\{0\}}}\\
&\overset{(\ref{extra2})}{=}&
\varepsilon _C(c_{1_{1_{<0>_{\{0\}}}}})
\varepsilon _C(c_{1_{2_{\{0\}}}})
a_1c_{1_{1_{<-1>_1}}}c_{1_{1_{<0>_{\{-1\}}}}}\ot 
c_{2_{<0>}}\\
&&\ot a_2c_{1_{1_{<-1>_2}}}c_{1_{2_{\{-1\}}}}
c_{2_{<-1>}}c_{3_{\{-1\}}}\ot c_{3_{\{0\}}}\\
&=&\varepsilon _C(c_{1_{<0>_{\{0\}}}})
\varepsilon _C(c_{2_{\{0\}}})
a_1c_{1_{<-1>_1}}c_{1_{<0>_{\{-1\}}}}\ot 
c_{3_{<0>}}\\
&&\ot a_2c_{1_{<-1>_2}}c_{2_{\{-1\}}}
c_{3_{<-1>}}c_{4_{\{-1\}}}\ot c_{4_{\{0\}}}\\
&\overset{(\ref{cucu2})}{=}&
\varepsilon _C(c_{1_{<0>_{\{0\}}}})
a_1c_{1_{<-1>_1}}c_{1_{<0>_{\{-1\}}}}\ot 
c_2\ot a_2c_{1_{<-1>_2}}c_{3_{\{-1\}}}\ot c_{3_{\{0\}}}\\
&\overset{(\ref{extra2})}{=}&
\varepsilon _C(c_{<0>_{1_{\{0\}}}})
a_1c_{<-1>_1}c_{<0>_{1_{\{-1\}}}}\ot 
c_{<0>_2}\ot a_2c_{<-1>_2}c_{<0>_{3_{\{-1\}}}}
\ot c_{<0>_{3_{\{0\}}}}\\
&\overset{(\ref{extra3})}{=}&
\varepsilon _C(c_{<0>_{\{0\}_1}})
a_1c_{<-1>_1}c_{<0>_{\{-1\}}}\ot 
c_{<0>_{\{0\}_2}}\ot a_2c_{<-1>_2}
c_{<0>_{\{0\}_{3_{\{-1\}}}}}\\
&&\ot c_{<0>_{\{0\}_{3_{\{0\}}}}}\\
&=&a_1c_{<-1>_1}c_{<0>_{\{-1\}}}\ot 
c_{<0>_{\{0\}_1}}\ot a_2c_{<-1>_2}
c_{<0>_{\{0\}_{2_{\{-1\}}}}}
\ot c_{<0>_{\{0\}_{2_{\{0\}}}}}\\
&\overset{(\ref{extra3})}{=}&
a_1c_{<-1>_1}c_{<0>_{1_{\{-1\}}}}\ot 
c_{<0>_{1_{\{0\}}}}\ot a_2c_{<-1>_2}
c_{<0>_{2_{\{-1\}}}}
\ot c_{<0>_{2_{\{0\}}}}\\
&\overset{(\ref{extra2})}{=}&
a_1c_{1_{<-1>_1}}c_{1_{<0>_{\{-1\}}}}\ot 
c_{1_{<0>_{\{0\}}}}\ot a_2c_{1_{<-1>_2}}
c_{2_{\{-1\}}}
\ot c_{2_{\{0\}}}, \;\;\;q.e.d.
\end{eqnarray*}
We have proved that $A_{\;W', \rho '}\ot C$ is a crossed 
coproduct coalgebra and we know from Theorem \ref{main} 
that $A\ot _{R', \sigma '}C$ is a crossed product algebra. 
We have to prove now that  
$A _{\;W'}^{\;\rho '}\bowtie _{\;R'}^{\;\sigma '}C$ is a 
cross product bialgebra, that is the maps 
$\varepsilon _A\ot \varepsilon _C:A\ot _{R', \sigma '}C
\rightarrow k$ and 
$\Delta ':A\ot _{R', \sigma '}C\rightarrow 
(A\ot _{R', \sigma '}C)\ot (A\ot _{R', \sigma '}C)$
are algebra maps. We will give an indirect proof. Define 
the map 
$\varphi :A _{\;W'}^{\;\rho '}\bowtie _{\;R'}^{\;\sigma '}C
\rightarrow 
A _{\;W_0}^{\;\rho _0}\bowtie _{\;R}^{\;\sigma }C$, 
$\varphi (a\ot c)=ac_{<-1>}\ot c_{<0>}$. We know from 
Theorem \ref{main} that $\varphi $ is an algebra isomorphism. 
We compute: 
\begin{eqnarray*}
(\varepsilon _A\ot \varepsilon _C)\circ \varphi (a\ot c)&=&
\varepsilon _A(ac_{<-1>})\varepsilon _C(c_{<0>})\\
&=&\varepsilon _A(a)\varepsilon _A(c_{<-1>})
\varepsilon _C(c_{<0>})\\
&\overset{(\ref{extra1})}{=}&
\varepsilon _A(a)\varepsilon _C(c)\\
&=&(\varepsilon _A\ot \varepsilon _C)(a\ot c).
\end{eqnarray*}
So $(\varepsilon _A\ot \varepsilon _C)\circ \varphi =
\varepsilon _A\ot \varepsilon _C$, and since 
$\varepsilon _A\ot \varepsilon _C:A\ot _{R, \sigma }C
\rightarrow k$ is an algebra map (because 
$A _{\;W_0}^{\;\rho _0}\bowtie _{\;R}^{\;\sigma }C$ is a 
cross product bialgebra) and $\varphi $ is an algebra 
isomorphism, it follows that 
$\varepsilon _A\ot \varepsilon _C:A\ot _{R', \sigma '}C
\rightarrow k$ is an algebra map. If we consider 
the comultiplication $\Delta $ of 
$A _{\;W_0}^{\;\rho _0}\bowtie _{\;R}^{\;\sigma }C$, 
which is defined by the formula $\Delta (a\ot c)=
a_1\ot c_1\ot a_2\ot c_2$ and which is an algebra map 
from $A\ot _{R, \sigma }C$ to $(A\ot _{R, \sigma }C)\ot 
(A\ot _{R, \sigma }C)$ because 
$A _{\;W_0}^{\;\rho _0}\bowtie _{\;R}^{\;\sigma }C$ is a 
cross product bialgebra, it is very easy to see, by using the 
formula (\ref{deltaprim}), that we have 
$\Delta '=(\varphi ^{-1}\ot \varphi ^{-1})\circ \Delta \circ \varphi $, 
and since $\varphi $ is an algebra isomorphism it follows that 
$\Delta ':A\ot _{R', \sigma '}C\rightarrow 
(A\ot _{R', \sigma '}C)\ot (A\ot _{R', \sigma '}C)$ is an 
algebra map. 

So, $A _{\;W'}^{\;\rho '}\bowtie _{\;R'}^{\;\sigma '}C$
is indeed a cross product bialgebra, and the relations 
$(\varepsilon _A\ot \varepsilon _C)\circ \varphi =
\varepsilon _A\ot \varepsilon _C$ and 
$\Delta '=(\varphi ^{-1}\ot \varphi ^{-1})\circ 
\Delta \circ \varphi $ show also that the map $\varphi $ 
is a coalgebra isomorphism, hence it is a bialgebra 
isomorphism. We know that $\varphi $ satisfies 
$\varphi (a\ot 1_C)=a\ot 1_C$, for all $a\in A$, hence 
$\varphi $ is a morphism of left $A$-modules, and the 
relation (\ref{extra2}) implies that $\varphi $ is also a 
morphism of right $C$-comodules. Hence, the cross 
product bialgebras 
$A _{\;W'}^{\;\rho '}\bowtie _{\;R'}^{\;\sigma '}C$ and
$A _{\;W_0}^{\;\rho _0}\bowtie _{\;R}^{\;\sigma }C$ 
are indeed equivalent.

Conversely, assume that 
$A _{\;W'}^{\;\rho '}\bowtie _{\;R'}^{\;\sigma '}C$ 
is a cross product bialgebra equivalent to 
$A _{\;W_0}^{\;\rho _0}\bowtie _{\;R}^{\;\sigma }C$, 
say via a map  
$\varphi :A _{\;W'}^{\;\rho '}\bowtie _{\;R'}^{\;\sigma '}C
\simeq
A _{\;W_0}^{\;\rho _0}\bowtie _{\;R}^{\;\sigma }C$. 
By Theorem \ref{mainconverse}, there exist linear maps 
$\theta , \gamma :C\rightarrow A\ot C$, 
with notation $\theta (c)=c_{<-1>}\ot c_{<0>}$ and 
$\gamma (c)=c_{\{-1\}}\ot c_{\{0\}}$, for all $c\in C$, such that 
the conditions (\ref{Rprim})--(\ref{cros4}) are satisfied and 
moreover we have 
$\varphi (a\ot c)=ac_{<-1>}\ot c_{<0>}$ and 
$\varphi ^{-1}(a\ot c)=ac_{\{-1\}}\ot c_{\{0\}}$ for all 
$a\in A$ and $c\in C$. By writing down the conditions that 
$\varphi $ and $\varphi ^{-1}$ are morphisms of comodules, we obtain the 
conditions (\ref{extra2}) and (\ref{extra3}). By writing down the conditions that 
$\varphi $ and $\varphi ^{-1}$ are counital, we obtain the condition (\ref{extra1}). 
By writing down 
the condition that $\varphi $ is a coalgebra isomorphism, 
we obtain $\Delta '=(\varphi ^{-1}\ot \varphi ^{-1})\circ 
\Delta \circ \varphi $, where $\Delta $ is the comultiplication of 
$A _{\;W_0}^{\;\rho _0}\bowtie _{\;R}^{\;\sigma }C$ and 
$\Delta '$ is the comultiplication of 
$A _{\;W'}^{\;\rho '}\bowtie _{\;R'}^{\;\sigma '}C$, hence 
$\Delta '$ is defined by the formula (\ref{deltaprim}). Finally, 
from Remark \ref{tare}, we can obtain the formulae for 
$W'$ and $\rho '$ by using the formula for $\Delta '$, and an easy 
computation using the formulae (\ref{extra2}) and (\ref{extra3}) 
shows that  $W'$ and $\rho '$ are given by the formulae 
(\ref{Wprim}) and respectively (\ref{rhoprim}).
\end{proof}

We present an example of equivalent cross product bialgebras, 
both of the type $A _{\;W}^{\;\rho}\bowtie _{\;R}^{\;\sigma }C$ 
where $A$ is a bialgebra and having the tensor product coalgebra structure. 
In \cite{am}, the authors introduced a construction, called {\em unified product}, 
which is a bialgebra of the type $A\ltimes H$, where $A$ is a bialgebra and $H$ is a 
coalgebra (with extra structures) and having the tensor product coalgebra structure. 
One can easily see that a unified product $A\ltimes H$ is a cross product bialgebra 
$A _{\;W_0}^{\;\rho _0}\bowtie _{\;R}^{\;\sigma }H$ 
(for certain $R$ and $\sigma $). If $A$ is a Hopf algebra and two unified 
products $A\ltimes H$ and $A\ltimes 'H$ are given such that they are both 
Hopf algebras, then \cite{am}, Theorem 3.4 provides a characterization of when 
$A\ltimes H$ and $A\ltimes ' H$ are (in our terminology) equivalent. 
If $A$, $A\ltimes H$ and $A\ltimes 'H$ are only bialgebras, Theorem 3.4 in 
\cite{am} does not work anymore; but one can characterize when $A\ltimes H$ and 
$A\ltimes 'H$ are equivalent in this situation by using the above Theorem  
\ref{maincros}.  \\[1mm]

There exists a ''mirror version'' of Brzezi\'{n}ski's crossed product (to be studied in 
detail elsewhere), as follows: if $(B, \mu , 1_B)$ is an (associative unital) algebra and 
$W$ is a vector space equipped with a distinguished element $1_W\in W$, then 
the vector space $W\otimes B$ is an associative algebra with unit $1_W\otimes 1_B$ 
and whose multiplication   has the property that 
$(w\ot b)(1_W\ot b')=w\ot bb'$, for all $b, b'\in B$ and 
$w\in W$, if and only if there exist linear maps 
$\nu :W\ot W\rightarrow W\ot B$ and 
$P:B\ot W\rightarrow W\ot B$ satisfying a certain list of conditions. 
The corresponding algebra structure is denoted by 
$W\overline{\ot}_{P, \nu }B$.  Similarly, there exist ''mirror versions'' 
for crossed coproduct coalgebras (of the type $D_{U, \eta }\overline{\otimes}Y$, 
where  $D$ is a coalgebra and $Y$ is a vector space) and for cross 
product bialgebras, of the type 
$D _{\;U}^{\;\eta}\overline{\bowtie} _{\;P}^{\;\nu }B$. Such a cross 
product bialgebra becomes canonically a right $B$-module and a left 
$D$-comodule, we have an analogous concept of equivalence between two such 
cross product bialgebras and an analogue of Theorem \ref{maincros} for them. 

We present now an example of two equivalent cross product bialgebras 
in the above sense, of the type appearing in the ''mirror version'' of Theorem 
\ref{maincros} (that is, one of them is of the form 
$D _{\;U}^{\;\eta}\overline{\bowtie} _{\;P}^{\;\nu }B$, where $B$ is a bialgebra 
and the coalgebra structure is the tensor product coalgebra $D\otimes B$). 
Namely, a well known result of Majid (\cite{majid}) asserts that the Drinfeld double of 
a finite dimensional quasitriangular Hopf algebra $H$ is isomorphic to a certain 
Radford biproduct. The explicit Radford biproduct that appears and the 
explicit isomorphism depend on the chosen explicit realization of the Drinfeld double 
of $H$. Unlike \cite{majid}, we choose a realization of $D(H)$ on 
$H^{* cop}\otimes H$, so the concrete formulae below are different from the 
ones in \cite{majid}; the same realization of the Drinfeld double 
was chosen in \cite{jlpvo}, where it was proved that, at the algebra level, 
the isomorphism in Majid's theorem is an example of invariance under twisting 
for twisted tensor products of algebras.   

So, let $H$ be a finite dimensional Hopf algebra with antipode $S$. 
Recall that the Drinfeld double $D(H)$ is a Hopf algebra having
$H^{* cop}\ot H$ as coalgebra structure and multiplication
$(p\ot h)(p'\ot h')=p(h_1\rightharpoonup p'\leftharpoonup
S^{-1}(h_3))\ot h_2h'$,
for all $p, p'\in H^*$ and $h, h'\in H$, where $\rightharpoonup $
and $\leftharpoonup $ are the left and right regular actions of $H$
on $H^*$ given by $(h\rightharpoonup p)(h')=p(h'h)$ and
$(p\leftharpoonup h)(h')= p(hh')$. 

Assume that $r=r^1\otimes r^2\in H\otimes H$ is a quasitriangular structure 
on $H$. Then on the vector space $H^*$ one can define certain structures that 
turn $H^*$ into a bialgebra in the Yetter-Drinfeld category $_H^H{\mathcal YD}$ 
(we denote it by $\underline{H}^*$) and we have a bialgebra isomorphism 
$\varphi :\underline{H}^*\times H\simeq D(H)$, $\varphi (p\otimes h)=
p\leftharpoonup S^{-1}(r^1)\otimes r^2h$, where 
$\underline{H}^*\times H$ is the Radford biproduct (cf. \cite{radford}). 

One can  see that both $\underline{H}^*\times H$ and $D(H)$ are ''mirror versions'' 
of cross product bialgebras between $H^{* cop}$ and $H$, and for 
$D(H)=H^{* cop}\otimes H$ we have that $H$ is a bialgebra and the 
coalgebra structure is the tensor product coalgebra. Obviously, we have 
$\varphi (\varepsilon \otimes h)=\varepsilon \otimes h$, for all $h\in H$, 
so $\varphi $ is a morphism of right $H$-modules, and one can easily check 
that $\varphi $ is also  a morphism of left $H^{* cop}$-comodules. Hence, 
$\underline{H}^*\times H$ and $D(H)$ are equivalent.   

\end{document}